\newcommand{\NN}{\mathbb{N}}
\newcommand{\QQ}{\mathbb{Q}}
\newcommand{\ZZ}{\mathbb{Z}}
\newcommand{\E}{\mathcal{E}}
\newcommand{\Hc}{\mathcal{H}}
\newcommand{\set}[1]{\left\{ #1 \right\}}
\newcommand{\setb}[1]{\left( #1 \right)}
\newcommand{\abs}[1]{\left| #1 \right|}
\newcommand{\bino}[2]{\begin{pmatrix} #1 \\ #2 \end{pmatrix}}
\newtheorem{mymasterthm}{notForUse}%[section]
\theoremstyle{definition}
\newtheorem{myrem}[mymasterthm]{Remark}
\theoremstyle{plain}
\newtheorem{mylemma}[mymasterthm]{Lemma}
\newtheorem{mythm}[mymasterthm]{Theorem}
\title[Approximation and power sums]{Approximation of values of algebraic elements over the ring of power sums}
\subjclass[2020]{11B37, 11J68, 11J87}
\keywords{Power sum, Diophantine approximation, Subspace theorem}
\author[C. Fuchs]{Clemens Fuchs}
\address{Clemens Fuchs\newline
	\indent University of Salzburg\newline
	\indent Department of Mathematics\newline
	\indent Hellbrunnerstr. 34 \newline
	\indent A-5020 Salzburg, Austria}
\email{clemens.fuchs@plus.ac.at}
\author[S. Heintze]{Sebastian Heintze}
\address{Sebastian Heintze\newline
	\indent Graz University of Technology\newline
	\indent Institute of Analysis and Number Theory\newline
	\indent Steyrergasse 30/II \newline
	\indent A-8010 Graz, Austria}
\email{heintze@math.tugraz.at}
\thanks{Supported by Austrian Science Fund (FWF): I4406.}
\begin{document}
	
	\maketitle
	
%	{
%		\noindent
%		\textcolor{red}{working version \hfill \today}
%	}
	
	\begin{abstract}
		Let $ \QQ\E_{\ZZ} $ be the set of power sums whose characteristic roots belong to $ \ZZ $ and whose coefficients belong to $ \QQ $, i.e. $ G : \NN \rightarrow \QQ $ satisfies
		\begin{equation*}
			G(n) = G_n = b_1 c_1^n + \cdots + b_h c_h^n
		\end{equation*}
		with $ c_1,\ldots,c_h \in \ZZ $ and $ b_1,\ldots,b_h \in \QQ $.
		Furthermore, let $ f \in \QQ[x,y] $ be absolutely irreducible and $ \alpha : \NN \rightarrow \overline{\QQ} $ be a solution $ y $ of $ f(G_n,y) = 0 $, i.e. $ f(G_n,\alpha(n)) = 0 $ identically in $ n $.
		Then we will prove under suitable assumptions a lower bound, valid for all but finitely many positive integers $ n $, for the approximation error if $ \alpha(n) $ is approximated by rational numbers with bounded denominator.
		After that we will also consider the case that $ \alpha $ is a solution of
		\begin{equation*}
			f(G_n^{(0)}, \ldots, G_n^{(d)},y) = 0,
		\end{equation*}
		i.e. defined by using more than one power sum and a polynomial $ f $ satisfying some suitable conditions.
		This extends results of Bugeaud, Corvaja, Luca, Scremin and Zannier.
	\end{abstract}
	
	\section{Introduction}
	
	The theory of Diophantine approximation has a long history.
	After results of Liouville and Thue, a famous milestone result was Roth's theorem.
	It states that for any algebraic number $ \alpha $ and arbitrary $ \varepsilon > 0 $ there is a positive constant $ c(\alpha,\varepsilon) $, depending on $ \varepsilon $ and the approximated number $ \alpha $, such that for all rational integers $ p,q $ with $ q > 0 $ and $ p/q \neq \alpha $ the lower bound
	\begin{equation*}
		\abs{\alpha - \frac{p}{q}} \geq \frac{c(\alpha,\varepsilon)}{q^{2+\varepsilon}}
	\end{equation*}
	holds.
	The appearing constant $ c(\alpha,\varepsilon) $ is ineffective, i.e. it is not known how the constant could be computed.
	For a more detailed introduction to Diophantine approximation we refer to \cite{zannier-2009}.
	
	In Roth's theorem there is only a single algebraic number $ \alpha $ which has to be approximated by rational numbers.
	It is a natural question to ask what can be said about the approximation of several numbers which are parametrized in some way, say by a positive integer $ n $.
	For references to other such results ``with moving targets'' we refer to the remarks (and papers cited) in \cite{corvaja-zannier-2005}.
	Obviously, we could apply Roth's result to each number separately, but in this case the dependence of the constant $ c(\alpha(n),\varepsilon) $ on the parameter $ n $ cannot be controlled.
	Therefore we are interested in (another) approximation result which gives a precisely described dependence on the parameter $ n $.
	
	Corvaja and Zannier proved in \cite{corvaja-zannier-2005} that for power sums $ G^{(1)} $ and $ G^{(2)} $ a positive constant $ k $ exists such that under suitable assumptions for all but finitely many positive integers $ n $ and for integers $ p,q $ with $ q $ positive and not too large we have
	\begin{equation*}
		\abs{\frac{G^{(1)}(n)}{G^{(2)}(n)} - \frac{p}{q}} \geq \frac{1}{q^k} e^{-n\varepsilon}.
	\end{equation*}
	Then they used this approximation result to deduce that under the given conditions the length of the continued fraction for $ G^{(1)}(n) / G^{(2)}(n) $ tends to infinity.
	
	A similar approximation statement for the square root of a power sum was given by Bugeaud and Luca in \cite{bugeaud-luca-2005} in order to prove, again under suitable assumptions, that the length of the period of the continued fraction expansion of $ \sqrt{G(n)} $ tends to infinity.
	Finally, Scremin \cite{scremin-2006} combined these two approximation results and showed that under assumptions which are very similar to those of Theorem \ref{thm:approxresult} below (hence we will not repeat them here in the introduction as they are a little bit technical) for three power sums $ G^{(1)}, G^{(2)}, G^{(3)} $ the lower bound
	\begin{equation*}
		\abs{\frac{\sqrt{G^{(1)}(n)} + G^{(2)}(n)}{G^{(3)}(n)} - \frac{p}{q}} \geq \frac{1}{q^k} e^{-n\varepsilon}
	\end{equation*}
	holds with finitely many exceptions.
	
	In the present paper we will generalize these approximation results to solutions $ \alpha(n) $ of $ f(G(n),y)=0 $ for an absolutely irreducible polynomial $ f \in \QQ[x,y] $ and a power sum $ G $.
	More precisely, we fix an algebraic closure $ \overline{\QQ} $ of $ \QQ $ and take, for every $ n $, an $ \alpha(n) $ which satisfies $ f(G(n),\alpha(n)) = 0 $. We say that $ \alpha(n) $ is a solution of $ f(G(n),y) = 0 $ meaning that $ \alpha : \NN \rightarrow \overline{\QQ} $ satisfies $ f(G(n),\alpha(n)) = 0 $ identically for every $ n $.
	Thus for $ f(x,y) = x-y^2 $ we get again the case $ \sqrt{G(n)} $ already covered by Bugeaud and Luca as well as Scremin.
	Since the continued fraction expansion has only for rational and quadratic algebraic numbers a special structure, i.e. it is finite and periodic, respectively, the application performed in the above mentioned papers cannot be generalized to our situation.
	Furthermore, we will consider the case that $ \alpha(n) $ is defined by
	\begin{equation*}
		f(G^{(0)}(n), \ldots, G^{(d)}(n),y) = 0
	\end{equation*}
	using more than one power sum and a polynomial $ f $ satisfying some suitable properties, and prove an approximation result in the same direction.
	Let us mention that this kind of generalization was already suggested in a final remark in \cite{corvaja-zannier-2005} by Corvaja and Zannier.
	
	\section{Notation and result}
	
	For rings $ B $ and $ C $ we denote by $ B\E_C $ the ring of power sums whose characteristic roots belong to $ C $ and whose coefficients belong to $ B $.
	In other words, we consider functions on the set of positive integers $ \NN $ of the form
	\begin{equation}
		\label{eq:sumGn}
		G_n := G(n) = b_1 c_1^n + \cdots + b_h c_h^n
	\end{equation}
	with $ c_1,\ldots,c_h \in C $ and $ b_1,\ldots,b_h \in B $.
	We write $ G_n \in B\E_C $ by slight abuse of language.
	In the special case of $ G_n \in \QQ\E_{\ZZ} $ we can, by splitting $ \NN $ into even and odd numbers, assume that all roots are positive.
	Hence we often suppose that $ c_1 > c_2 > \cdots > c_h > 0 $.
	
	Our first approximation result is now the following theorem which states that the values of the considered function $ \alpha $, which is algebraic over the ring $ \QQ\E_{\ZZ} $, along suitable arithmetic progressions either have good approximations by the values of a fixed element of $ \QQ\E_{\ZZ} $ for infinitely many $ n $, or cannot be approximated very good by (arbitrary) rational numbers with bounded denominator infinitely many times.
	
	\begin{mythm}
		\label{thm:approxresult}
		Let $ f(x,y) \in \QQ[x,y] $ be absolutely irreducible and $ G_n \in \QQ\E_{\ZZ} $ as in \eqref{eq:sumGn} with $ c_1 > c_2 > \cdots > c_h > 0 $ and $ c_1 > 1 $.
		Moreover, let $ \alpha(n) $ be a solution $ y $ of $ f(G_n,y) = 0 $.
		Then there exists a positive integer $ s $ such that for each fixed $ r \in \set{0,1,\ldots,s-1} $ we get the subsequent approximation result:
		There exists an integer $ k \geq 2 $ such that for any $ \varepsilon > 0 $ which is small enough, i.e.
		\begin{equation*}
			\varepsilon < \min \setb{\frac{1}{2(s+2)}, \frac{1}{2k}},
		\end{equation*}
		we have the following:
		
		If there is no power sum $ \eta \in \QQ\E_{\ZZ} $ with non-negative characteristic roots such that
		\begin{equation*}
			\abs{\alpha(sm+r) - \eta(m)} \leq e^{-(sm+r)\varepsilon}
		\end{equation*}
		for infinitely many values of $ m $, then for all but finitely many values of $ m $ and for $ p,q \in \ZZ $ with $ 0 < q < e^{(sm+r)\varepsilon} $ it holds
		\begin{equation*}
			\abs{\alpha(sm+r) - \frac{p}{q}} > \frac{1}{q^k} e^{-(sm+r)\varepsilon}.
		\end{equation*}
	\end{mythm}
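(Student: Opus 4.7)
The plan is to start from the Puiseux expansion of $\alpha$ around $x=\infty$. Since $f$ is absolutely irreducible, there exist a positive integer $N$, an integer $\nu$, and algebraic numbers $a_i \in \overline{\QQ}$ such that for $|x|$ large enough
\[ \alpha(x) = \sum_{i \geq 0} a_i\, x^{(\nu - i)/N}. \]
Substituting $x = G_n = b_1 c_1^n(1 + R_n)$ with $R_n = \sum_{j \geq 2}(b_j/b_1)(c_j/c_1)^n \to 0$, and expanding each $G_n^{(\nu-i)/N}$ via the binomial series, I reorganize $\alpha(n)$ as a convergent ``generalized power sum''
\[ \alpha(n) = \sum_\mu D_\mu\, \Gamma_\mu^n, \]
where every $\Gamma_\mu$ is a positive product of rational powers of $c_1, \ldots, c_h$ (so $\Gamma_\mu > 0$) and $D_\mu \in \overline{\QQ}$.

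Next I fix $s$ to be a positive integer divisible by $N$ and sufficiently composite so that $\Gamma_\mu^s \in \QQ_{>0}$ for every $\Gamma_\mu$ that appears, that $\Gamma_\mu^s \in \ZZ$ whenever $\Gamma_\mu \geq 1$, and that summation over Galois conjugates produces rational coefficients in $m$ for each residue $r \in \{0,\ldots,s-1\}$. Fixing such an $r$ I define
\[ \eta^*(m) := \sum_{\Gamma_\mu \geq 1} D_\mu\, \Gamma_\mu^{sm + r}. \]
This is a finite sum lying in $\QQ\E_\ZZ$ whose characteristic roots $\Gamma_\mu^s$ in the variable $m$ are non-negative integers, and the tail estimate reads
\[ \abs{\alpha(sm+r) - \eta^*(m)} = O\!\left(\Gamma_*^{sm+r}\right) \]
for some $\Gamma_* < 1$ depending only on the data.

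Now suppose the conclusion fails: there are infinitely many $m$ and integers $p_m, q_m$ with $0 < q_m < e^{(sm+r)\varepsilon}$ and $\abs{\alpha(sm+r) - p_m/q_m} \leq q_m^{-k} e^{-(sm+r)\varepsilon}$. If also $\abs{\alpha(sm+r) - \eta^*(m)} \leq e^{-(sm+r)\varepsilon}$ for infinitely many such $m$, then taking $\eta = \eta^*$ already gives the escape case and we are done. Otherwise, the triangle inequality forces $q_m \eta^*(m) - p_m$ to be extremely small for all these $m$. Writing $q_m \eta^*(m) - p_m$ as an integral linear combination of the monomials $\Gamma_\mu^{sm+r}$ (after clearing denominators to $S$-integers, with $S$ the finite set consisting of the archimedean place and the primes dividing the characteristic roots), I set up the standard system of linear forms at the places of $S$ and apply Schmidt's subspace theorem as in the papers of Corvaja--Zannier, Bugeaud--Luca, and Scremin. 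The resulting inequality either contradicts the assumption on $\varepsilon$ (yielding the desired lower bound) or forces the solution vectors to lie in finitely many proper subspaces; these subspaces translate into a non-trivial linear relation amongst the pairwise distinct $\Gamma_\mu^s$, which is impossible.

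The main obstacle is the joint construction of $s$ and $\eta^*$: one must simultaneously absorb the denominators of the Puiseux exponents and the roots of unity arising from Galois conjugation of the algebraic coefficients $a_i$, while keeping the number of variables and $S$-places in the subspace-theorem calculation small enough that the product inequality matches the claimed threshold $\varepsilon < \min\!\left(1/(2(s+2)), 1/(2k)\right)$. Verifying non-degeneracy — that no proper subspace can accommodate infinitely many of the solution vectors coming from distinct $m$ — is the technical heart of the argument.
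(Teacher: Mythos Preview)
Your overall architecture—Puiseux expansion, truncation to a power sum, then the Subspace theorem—matches the paper's, but two of your key moves do not go through, and they are precisely the places where the paper does something different.

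\textbf{The power sum $\eta^*$ cannot be forced into $\QQ\E_{\ZZ}$.} You claim that by choosing $s$ ``sufficiently composite'' and by ``summation over Galois conjugates'' the truncation $\eta^*(m)=\sum_{\Gamma_\mu\ge 1}D_\mu\,\Gamma_\mu^{sm+r}$ lands in $\QQ\E_{\ZZ}$. Neither mechanism works. The coefficients $D_\mu$ come from the Puiseux coefficients $a_i\in\overline{\QQ}$ of a \emph{single} branch $\alpha$; averaging over Galois conjugates would replace $\alpha$ by a different function, so you cannot rationalise the $D_\mu$ this way. And the roots: a typical $\Gamma_\mu$ is $c_1^{(\nu-i)/N}\prod_j(c_j/c_1)^{e_j}$; making $s$ a multiple of $N$ gives $\Gamma_\mu^s\in\QQ$, but no finite $s$ will in general make $\Gamma_\mu^s\in\ZZ$ (take e.g.\ $c_1=8$, $c_2=5$, $N=3$). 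The paper avoids this entirely: its approximating power sum $\eta_r$ is only required to lie in $\overline{\QQ}\E_{\QQ}$, with algebraic coefficients and positive rational roots. Consequently your first ``escape case'' (taking $\eta=\eta^*$) is unavailable.

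\textbf{The Subspace theorem does not yield a contradiction; it \emph{produces} the $\eta$.} You write that the proper subspaces ``translate into a non-trivial linear relation amongst the pairwise distinct $\Gamma_\mu^s$, which is impossible.'' But the vectors fed into the Subspace theorem have a coordinate $pd^m$ coming from the unknown numerator $p$, so the subspace relation reads $z_0\,p\,d^m=\sum_i z_i\,q\,e_i^m$. After checking $z_0\neq 0$ this says $p/q=\beta(m)$ for a power sum $\beta\in\QQ\E_{\QQ}$—no contradiction yet. The paper then invokes a separate lemma (its Lemma on denominators of values of $\QQ\E_{\QQ}$, due to Corvaja--Zannier): if some root $d_i$ of $\beta$ were non-integral, the denominator of $\beta(m)$ would be at least $2^m e^{-m\varepsilon}$, which is incompatible with $q<e^{(sm+r)\varepsilon}$ once $\varepsilon<1/(2(s+2))$. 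This is where that otherwise mysterious bound on $\varepsilon$ enters. The conclusion is $\beta\in\QQ\E_{\ZZ}$, and \emph{this} $\beta$ is the $\eta$ whose existence contradicts the theorem's hypothesis. In short, the rational-coefficient, integer-root power sum is manufactured by the Subspace theorem together with the denominator lemma, not by the Puiseux truncation.
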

	
	Before going on, let us give some remarks on the above theorem answering natural questions that might arise.
	
	\begin{myrem}
		We emphasize that the integer $ s $ in the above theorem does only depend on the polynomial $ f $.
		The integer $ k $ depends on the power sum $ G_n $, the polynomial $ f $ as well as on the integers $ s $ and $ r $, but it is independent of $ n $.
	\end{myrem}
	
	\begin{myrem}
		The assumption in Theorem \ref{thm:approxresult} that $ \varepsilon $ is small enough is a technical assumption needed in the proof.
		In practical applications one would show that for all $ \varepsilon > 0 $ there is no power sum $ \eta \in \QQ\E_{\ZZ} $ with non-negative characteristic roots such that
		\begin{equation*}
			\abs{\alpha(sm+r) - \eta(m)} \leq e^{-(sm+r)\varepsilon}
		\end{equation*}
		(cf. the proof of Corollary 3.2 in \cite{scremin-2006}).
	\end{myrem}
	
	\begin{myrem}
		The exponent $ k $ of the denominator $ q $ in the lower bound at the end of Theorem \ref{thm:approxresult} can in general be much larger than $ 2 $.
		But this are the costs we have to pay in order to get an explicit constant and, in particular, an explicit dependence of this ``constant'' on the index $ n $ if we compare this with Roth's theorem.
		Note that despite the possible larger size the integer $ k $ is a constant independent of $ \varepsilon $.
		Thus we get the new (weaker) lower bound
		\begin{equation*}
			\frac{1}{q^k} e^{-(sm+r)\varepsilon} > e^{-(sm+r)(k+1)\varepsilon} = e^{-(sm+r)\varepsilon'}
		\end{equation*}
		showing that the concrete value of $ k $ is not that important.
	\end{myrem}
	
	\begin{myrem}
		The theorem does only say something about small values of $ q $.
		But this is not really restrictive since the bound in the theorem is not significant for large values of $ q $.
		More precisely, there is a positive constant $ C $, depending on $ \alpha $, such that for $ q \geq e^{Cn} $ the lower bound from the theorem is weaker than the bound obtained directly from the inequality $ \abs{\theta} \geq \Hc(\theta)^{-1} $, where $ \Hc $ denotes the height function and $ \theta $ is a nonzero algebraic number (cf. the remarks in \cite{corvaja-zannier-2005}).
	\end{myrem}
	
	Theorem \ref{thm:approxresult} only deals with one power sum.
	In our second theorem we will consider the case when $ \alpha(n) $ is defined using several power sums.
	For this let $ f(x_0,\ldots,x_d,y) \in \QQ[x_0,\ldots,x_d,y] $ and $ G_n^{(0)}, \ldots, G_n^{(d)} \in \QQ\E_{\ZZ} $.
	Then the coefficients of $ f(G_n^{(0)}, \ldots, G_n^{(d)},y) $ as a polynomial in $ y $ are again elements of $ \QQ\E_{\ZZ} $.
	Hence without loss of generality we may assume that
	\begin{equation}
		\label{eq:finserted}
		f(G_n^{(0)}, \ldots, G_n^{(d)},y) = G_n^{(d)} y^d + \cdots + G_n^{(1)} y + G_n^{(0)}
	\end{equation}
	that is
	\begin{equation}
		\label{eq:smallf}
		f(x_0,\ldots,x_d,y) = x_d y^d + \cdots + x_1 y + x_0.
	\end{equation}
	Let $ c_1,\ldots,c_h \in \ZZ $ be all characteristic roots appearing in $ G_n^{(0)}, \ldots, G_n^{(d)} $.
	As above we may assume that $ c_1 > c_2 > \cdots > c_h > 0 $.
	Equation \eqref{eq:finserted} can now be rewritten as
	\begin{equation*}
		\widetilde{f}(c_1^n,\ldots,c_h^n,y) = a_d(c_1^n,\ldots,c_h^n) y^d + \cdots + a_0(c_1^n,\ldots,c_h^n)
	\end{equation*}
	with linear polynomials $ a_0,\ldots,a_d \in \QQ[x_1,\ldots,x_h] $.
	Here the further assumption $ c_h > 1 $ is no restriction as the $ a_i $ may have a constant part.
	Dividing by $ c_1^n $, we get that the solutions $ y $ of
	\begin{equation*}
		f(G_n^{(0)}, \ldots, G_n^{(d)},y) = 0
	\end{equation*}
	are those of
	\begin{equation*}
		F(g_1^n,\ldots,g_h^n,y) = 0
	\end{equation*}
	where
	\begin{equation}
		\label{eq:defofbigF}
		F(x_1,\ldots,x_h,y) = l_d(x_1,\ldots,x_h) y^d + \cdots + l_0(x_1,\ldots,x_h)
	\end{equation}
	for suitable linear polynomials $ l_0,\ldots,l_d \in \QQ[x_1,\ldots,x_h] $ and
	\begin{equation}
		\label{eq:defofgi}
		\set{g_1,\ldots,g_h} = \set{\frac{c_2}{c_1},\ldots,\frac{c_h}{c_1},\frac{1}{c_1}}
	\end{equation}
	with $ 1 > g_1 > g_2 > \cdots > g_h > 0 $.
	The polynomial $ F $ plays an important role in our second theorem which states the following:
	
	\begin{mythm}
		\label{thm:severallrs}
		Let $ f(x_0,\ldots,x_d,y) \in \QQ[x_0,\ldots,x_d,y] $ be as in \eqref{eq:smallf} and $ G_n^{(0)}, \ldots, G_n^{(d)} \in \QQ\E_{\ZZ} $ with non-negative characteristic roots and $ G_n^{(0)} \neq 0 $.
		Further construct $ F $ and $ g_1,\ldots,g_h $ as in \eqref{eq:defofbigF} and \eqref{eq:defofgi}, respectively.
		Let $ \alpha(n) $ be a solution $ y $ of
		\begin{equation*}
			f(G_n^{(0)}, \ldots, G_n^{(d)},y) = 0,
		\end{equation*}
		which is equivalent to saying
		\begin{equation*}
			F(g_1^n,\ldots,g_h^n,\alpha(n)) = 0.
		\end{equation*}
		Moreover, assume that $ l_d(0,\ldots,0) \neq 0 $ and that $ F(0,\ldots,0,y) $ has neither a multiple nor a rational zero as a polynomial in $ y $.
		
		Then there exists an integer $ k \geq 2 $ such that for all $ \varepsilon > 0 $ with $ \varepsilon < \frac{1}{2k} $ we have the following:
		For all but finitely many values of $ n $ and for $ p,q \in \ZZ $ with $ 0 < q < e^{n\varepsilon} $ it holds
		\begin{equation*}
			\abs{\alpha(n) - \frac{p}{q}} > \frac{1}{q^k} e^{-n\varepsilon}.
		\end{equation*}	
	\end{mythm}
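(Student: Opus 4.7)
My plan follows the Corvaja--Zannier--Scremin template, adapted to solutions of polynomial equations over several power sums.

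\textbf{Analytic expansion.} Since $0 < g_i < 1$, the values $g_i^n$ tend to $0$, so from $F(g_1^n,\ldots,g_h^n,\alpha(n)) = 0$ together with the assumptions $l_d(0,\ldots,0) \neq 0$ and that $F(0,\ldots,0,y)$ has only simple roots, $\alpha(n)$ must converge to some simple root $\alpha^* \in \overline{\QQ} \setminus \QQ$ of $F(0,\ldots,0,y)$ (irrationality being exactly the hypothesis that $F(0,\ldots,0,y)$ has no rational zero). The implicit function theorem applied to $F(x_1,\ldots,x_h,y) = 0$ at $(\mathbf{0},\alpha^*)$ yields a convergent Taylor expansion
\[
\alpha(n) = \alpha^* + \sum_{\mathbf{i} \in \NN^h,\, |\mathbf{i}| \geq 1} a_\mathbf{i}\, g_1^{i_1 n}\cdots g_h^{i_h n}\qquad (n \text{ large}),
\]
with coefficients $a_\mathbf{i} \in K := \QQ(\alpha^*)$. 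The truncation $\alpha_N(n)$ obtained by keeping only $|\mathbf{i}| \leq N$ approximates $\alpha(n)$ within $O(g_1^{(N+1)n})$. Because the $g_i$ may be multiplicatively dependent, I fix a multiplicatively independent basis $\tilde g_1,\ldots,\tilde g_s$ of the group generated by $g_1,\ldots,g_h$ in $\QQ_{>0}^*$ and regroup $\alpha_N(n) = \sum_{\mu=0}^{M-1} A_\mu B_\mu(n)$, where $B_0 \equiv 1$, each $B_\mu(n) = \tilde g^{\mathbf{j}^{(\mu)} n}$ for $\mu \geq 1$ has base in $(0,1)$, the $B_\mu$ are pairwise distinct exponential characters, and $A_\mu \in K$.

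\textbf{Subspace step.} Assume for contradiction an infinite set of $n$ admitting $p,q \in \ZZ$ with $0 < q < e^{n\varepsilon}$ and $|\alpha(n) - p/q| \leq q^{-k} e^{-n\varepsilon}$. Let $S$ consist of the archimedean places of $K$ and the places above the primes dividing $c_1 \cdots c_h$. After multiplying by a suitable power of $c_1$ to clear denominators, the vectors
\[
\mathbf{y}_n = \bigl(p,\, q,\, qB_1(n),\, \ldots,\, qB_{M-1}(n)\bigr)
\]
become $S$-integer vectors in $K^{M+1}$. I apply the Schmidt Subspace Theorem with the distinguished linear form $L_{\infty,0}(\mathbf{X}) = -X_0 + A_0 X_1 + \sum_{\mu \geq 1} A_\mu X_{\mu+1}$ at the archimedean place, so that $L_{\infty,0}(\mathbf{y}_n) = q\alpha_N(n) - p$, and all other $L_{v,j}$ equal to coordinate projections. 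The estimate
\[
|L_{\infty,0}(\mathbf{y}_n)|_\infty \leq q\,|\alpha_N(n) - \alpha(n)| + |q\alpha(n) - p| \leq C q\, g_1^{(N+1)n} + q^{1-k} e^{-n\varepsilon},
\]
combined with the product formula for the coordinate forms, bounds the Subspace product by $\lVert \mathbf{y}_n \rVert^{-\delta}$ for some $\delta > 0$, provided $N$ is fixed large enough in terms of $g_1$ and $k > M$. The restriction $\varepsilon < 1/(2k)$ is precisely what makes the exponents work out. The theorem then confines all but finitely many $\mathbf{y}_n$ to a finite union of proper $K$-subspaces.

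\textbf{Case analysis and conclusion.} Pass to an infinite subsequence on which a single nontrivial relation $\lambda_0 p + \lambda_1 q + \sum_{\mu \geq 1} \lambda_{\mu+1}\, q B_\mu(n) = 0$ holds. If $\lambda_0 = 0$, dividing by $q$ leaves a nontrivial $K$-linear dependence among the pairwise distinct exponential characters $1, B_1, \ldots, B_{M-1}$, impossible along an infinite set of $n$. If $\lambda_0 \neq 0$, solving for $p/q$ and letting $n \to \infty$ (so $B_\mu(n) \to 0$) forces $\lambda_1/\lambda_0 = -\alpha^*$; choosing $\sigma \in \mathrm{Gal}(\overline{\QQ}/\QQ)$ with $\sigma(\alpha^*) \neq \alpha^*$ (available since $\alpha^* \notin \QQ$), applying $\sigma$ to the equality expressing the rational $p/q$ in terms of $K$-quantities, and subtracting the two expressions yields
\[
\sigma(\alpha^*) - \alpha^* = \sum_{\mu \geq 1} \bigl(\sigma(\lambda_{\mu+1}/\lambda_0) - \lambda_{\mu+1}/\lambda_0\bigr) B_\mu(n),
\]
whose right-hand side tends to $0$ while the left is a fixed nonzero constant --- a contradiction. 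The most delicate part of the argument is the quantitative Subspace bookkeeping: because the ambient dimension $M$ grows polynomially in $N$, one must balance $(N+1)\log(1/g_1)$ against the inflation $q^M \leq e^{nM\varepsilon}$ and the norm growth $\lVert \mathbf{y}_n \rVert \leq e^{n(N\log c_1 + \varepsilon)}$ in order to secure $\delta > 0$, and this is exactly what pins down the admissible relationship between $k$ and $\varepsilon$.
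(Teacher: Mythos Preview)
Your outline is correct and follows the same two-stage template as the paper: an analytic expansion of $\alpha(n)$ via the implicit function theorem, truncation to a power sum, and then the Subspace Theorem to derive a linear relation that forces a contradiction. The setup of the linear forms and the indirect hypothesis match the paper's proof closely.

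The genuine difference lies in the endgame after the Subspace Theorem. You apply Subspace over $K=\QQ(\alpha^*)$, obtain a $K$-rational relation, deduce $-\lambda_1/\lambda_0=\alpha^*$ by taking the limit, and then reach a contradiction via Galois conjugation. The paper instead works entirely over $\QQ$: the truncation $\eta(n)=\sum w_i d_i^n$ has rational characteristic roots $d_i\in(0,1]$, the Subspace Theorem is applied to integer vectors $(pd^n,qe_1^n,\ldots,qe_H^n)\in\ZZ^{H+1}$, and the resulting relation has \emph{rational} coefficients, giving $p/q=\beta(n)$ with $\beta\in\QQ\E_\QQ$. The paper then invokes Lemma~\ref{lem:qversusz} (a denominator lower bound for values of $\QQ\E_\QQ$) together with the constraint $q<e^{n\varepsilon}$ to force every surviving $d_i$ into $\ZZ\cap(0,1]=\{1\}$, whence $\beta(n)\equiv z_1\in\QQ$ and $\alpha(n)\to z_1$, contradicting the irrationality of every root of $F(0,\ldots,0,y)$. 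Your Galois argument sidesteps Lemma~\ref{lem:qversusz} entirely at the cost of invoking a number-field Subspace Theorem; conversely, since your vectors are actually rational, you could have used the $\QQ$-version directly and obtained $\lambda_i\in\QQ$, making $-\lambda_1/\lambda_0=\alpha^*\in\QQ$ an immediate contradiction without Galois. The paper's route is also more explicit quantitatively: it fixes $t=1/9$ first, so that $H$ and then $k=H+1$ are determined before $\varepsilon$ is chosen, avoiding the delicate balancing of $N$, $M$, and $k$ that you flag as the hardest part of your version.
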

	
	The assumption that $ F(0,\ldots,0,y) $ has no rational zero replaces the non-existence of $ \eta $ assumed in Theorem \ref{thm:approxresult}.
	
	\section{Preliminaries}
	
	We will need a suitable explicit version of Puiseux's theorem.
	For this we take Theorem 5 from \cite{fuchs-2003} which is a summary of some results of Coates \cite{coates-1970}.
	Assume that $ f(x,y) \in K[x,y] $ for a number field $ K $ is absolutely irreducible, with degree $ m $ in $ x $ and degree $ n $ in $ y $, and let $ f_0 \geq 2 $ be an upper bound for the absolute values of the conjugates of the coefficients of $ f $.
	Define $ N = \max \setb{n,m,3} $.
	Denoting by $ h(a) $ the maximum of the absolute values of the conjugates of an algebraic number $ a $, by $ \delta(a) $ the least positive rational integer such that $ \delta(a)a $ is an algebraic integer, and by $ \sigma(a) $ the maximum of $ \deg a, \delta(a), h(a) $, the statement is:
	
	\begin{mythm}[\textbf{Explicit version of Puiseux's theorem}]
		\label{thm:puiseux}
		Let $ F $ be an algebraic function field over an algebraically closed field $ \overline{K} $ of characteristic zero, given by $ f(x,y) = 0 $.
		Let $ \xi \in \overline{K} $ and $ A_i $ for $ 1 \leq i \leq r = r(\xi) $ be the valuations of $ F $ extending the valuation of $ \overline{K}(x) $ defined by $ x - \xi $, and let $ e_i $ be the ramification index of $ A_i $.
		We write
		\begin{equation*}
			y = \sum_{k=0}^{\infty} w_{ik} (x - \xi)^{k/e_i}
		\end{equation*}
		for the Puiseux expansion of $ y $ at $ A_i $.
		Then the coefficients $ w_{ik} $ ($ 1 \leq i \leq r $, $ k = 0,1,\ldots $) are algebraic numbers, and the number field $ K' $ obtained by adjoining $ \xi $ and these coefficients to $ K $ has degree at most $ (N \deg \xi)^N $ over $ K $.
		Further, $ K' $ is generated over $ K $ by $ \xi $ and $ w_{ik} $ ($ 1 \leq i \leq r $, $ 0 \leq k \leq 2N^4 $).
		Finally, there exists a positive rational integer $ \Delta $ such that $ \Delta^{k+1} w_{ik} $ ($ 1 \leq i \leq r $, $ k = 0,1,\ldots $) is an algebraic integer with
		\begin{equation*}
			\max \setb{\Delta^{k+1}, \Delta^{k+1} h(w_{ik})} \leq \Lambda^{k+1}
		\end{equation*}
		where $ \Lambda = (f_0 \sigma(\xi))^{\mu} $ for $  \mu = (N^4 n \deg \xi)^{3N^4} $.
		
		Let $ Q_i $ for $ 1 \leq i \leq r = r(\infty) $ be the valuations of $ F $ extending the valuation of $ \overline{K}(x) $ defined by $ 1/x $.
		Let $ e_i $ be the ramification index of $ Q_i $, and let
		\begin{equation*}
			y = \left( \frac{1}{x} \right)^{-m} \sum_{k=0}^{\infty} w_{ik} \left( \frac{1}{x} \right)^{k/e_i}
		\end{equation*}
		be the expansion of $ y $ at $ Q_i $.
		Then the coefficients are algebraic numbers, and the number field $ K' $ obtained by adjoining them to $ K $ has degree at most $ N^N $ over $ K $.
		Further, $ K' $ is generated over $ K $ by the $ w_{ik} $ ($ 1 \leq i \leq r $, $ 0 \leq k \leq 2N^4 $).
		Finally, there exists a positive rational integer $ \Delta $ such that $ \Delta^{k+1} w_{ik} $ ($ 1 \leq i \leq r $, $ k = 0,1,\ldots $) is an algebraic integer with
		\begin{equation*}
			\max \setb{\Delta^{k+1}, \Delta^{k+1} h(w_{ik})} \leq \Lambda^{k+1}
		\end{equation*}
		where $ \Lambda = f_0^{\mu} $ for $  \mu = (N^4 n)^{3N^4} $.
	\end{mythm}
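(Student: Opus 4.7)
The statement is a consolidation of results of Coates, so my strategy is to derive it by combining the classical Newton-polygon construction of Puiseux expansions with explicit bookkeeping on the splitting field, the denominators, and the heights of the coefficients. I would first handle the case of a finite point $\xi$ and then deduce the statement at infinity by the substitution $x \mapsto 1/x$: this turns $f(x,y)$ into a polynomial $\tilde f(t,y) = t^m f(1/t, y)$ to which the finite-point result applies with $\xi = 0$, and clearing the leading power of $t$ accounts for the factor $(1/x)^{-m}$ in front of the expansion at $\infty$.

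For the expansions at $\xi$, I would run the Newton polygon of $f(\xi + t, y)$ in $\overline{K}[t][y]$ to read off the number $r$ of places $A_i$ above $x = \xi$, the ramification indices $e_i$ (with $\sum e_i = n$), and the leading exponents and coefficients $w_{i0}$. The higher coefficients $w_{ik}$ are obtained recursively by substituting the formal series $\sum_k w_{ik} t^k$ into $f(\xi + t^{e_i}, y) = 0$: at each step one equates the coefficient of a suitable power of $t$ to zero and isolates the unknown $w_{ik}$ by dividing by a nonzero algebraic factor determined by the partial derivative of $f$ with respect to $y$ and the previously computed coefficients.

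The degree bound $[K':K] \le (N\deg\xi)^N$ comes from the fact that all $w_{ik}$ live in the splitting field, over $K(\xi)$, of a polynomial of degree $n$ obtained from $f$, together with the action of the Galois group permuting the $r \le n$ branches. The generation statement using only the finitely many coefficients with $0 \le k \le 2N^4$ rests on a separation argument: once enough initial terms are prescribed, the distinct Puiseux branches are forced apart, and one shows that $2N^4$ terms already exceed the separating precision controlled by the discriminant of $f$ in $y$ (whose degree in $x$ is bounded in terms of $N$); from that point on every further $w_{ik}$ is a polynomial expression in the preceding ones with coefficients in $K'$, hence adjoining them is redundant.

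The hard part, and the real content of the quantitative version, is the joint height and denominator bound $\max\setb{\Delta^{k+1}, \Delta^{k+1} h(w_{ik})} \le \Lambda^{k+1}$. I would prove it by induction on $k$. The recursion expresses $w_{ik}$ as a polynomial in $w_{i0}, \ldots, w_{i,k-1}$ of bounded degree, with coefficients built from those of $f$ (so bounded by a fixed power of $f_0$), divided by a fixed factor that divides a well-chosen integer $\Delta$ (essentially the resultant-type quantity arising from $\partial f/\partial y$ evaluated along the branch). Propagating heights through this recursion multiplies the bound by a factor of size at most $\Lambda$ at each step, which yields the stated exponential growth with $\Lambda = (f_0\sigma(\xi))^{\mu}$ and $\mu$ absorbing the combinatorial dependence on $N$, $n$, and $\deg\xi$. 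The main obstacle is exactly this uniform control of denominators: at each recursive step one must verify that the division leaves an algebraic integer after multiplying by $\Delta^{k+1}$, which forces $\Delta$ to encode the full discriminant/ramification data from the outset. For the expansion at infinity, the same induction is run on $\tilde f$, whose coefficient heights depend only on $f_0$, and this is why the $\xi$-dependence disappears and $\Lambda$ simplifies to $f_0^{\mu}$.
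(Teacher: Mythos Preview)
The paper does not prove this statement at all: it is quoted verbatim as a known result (``we take Theorem 5 from \cite{fuchs-2003} which is a summary of some results of Coates \cite{coates-1970}'') and is used as a black box in the proof of Lemma~\ref{lem:approxps}. So there is no proof in the paper to compare your proposal against.

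Your sketch is a plausible outline of how one would go about establishing such explicit bounds---Newton polygon for the branch structure, recursive determination of the $w_{ik}$, induction on heights and denominators via a discriminant-controlled $\Delta$---and this is indeed the shape of Coates' original argument. But as a self-contained proof it is far from complete: the specific exponents $(N\deg\xi)^N$, $2N^4$, and $\mu = (N^4 n \deg\xi)^{3N^4}$ are not derived, and obtaining precisely these constants requires careful tracking through Coates' construction (which in \cite{coates-1970} occupies several pages and relies on auxiliary lemmas about integral bases and differents of function fields). If you intend to actually prove the theorem rather than cite it, you would need to either reproduce that analysis in detail or replace the explicit constants by ``effectively computable'' ones and accept a weaker statement. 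For the purposes of the present paper, however, the correct move is simply to cite the result, as the authors do.
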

	
	Puiseux's theorem will be used in the proof of our first approximation result. The second one will be proven by means of a suitable version of the Implicit function theorem.
	For more information about this and other versions of the Implicit function theorem we refer to \cite{krantz-parks-2002-1} and \cite{krantz-parks-2002-2}.
	In the formulation of the theorem we use for a multiindex $ \tau = (\tau_1,\ldots,\tau_r) \in \NN_0^r $, where $ \NN_0 $ denotes the set of non-negative integers, the notation
	\begin{equation*}
		\abs{\tau} = \tau_1 + \cdots + \tau_r
	\end{equation*}
	and write $ 0 $ as a shortcut for $ (0,\ldots,0) $.
	
	\begin{mythm}[\textbf{Implicit function theorem}]
		\label{thm:implicitfunc}
		Suppose the power series
		\begin{equation*}
			F(x_1,\ldots,x_r,y) = \sum_{\abs{\tau} \geq 0, k \geq 0} a_{\tau,k} x_1^{\tau_1} \cdots x_r^{\tau_r} y^k
		\end{equation*}
		is absolutely convergent for $ \abs{x_1} + \cdots + \abs{x_r} \leq R_1 $, $ \abs{y} \leq R_2 $. If
		\begin{equation*}
			a_{0,0} = 0 \text{ and } a_{0,1} \neq 0
		\end{equation*}
		then there exist $ r_0 > 0 $ and a power series
		\begin{equation}
			\label{eq:implfuncpowser}
			f(x_1,\ldots,x_r) = \sum_{\abs{\tau} > 0} A_{\tau} x_1^{\tau_1} \cdots x_r^{\tau_r}
		\end{equation}
		such that \eqref{eq:implfuncpowser} is absolutely convergent for $ \abs{x_1} + \cdots + \abs{x_r} \leq r_0 $ and
		\begin{equation*}
			F(x_1,\ldots,x_r,f(x_1,\ldots,x_r)) = 0.
		\end{equation*}
		Moreover, if the coefficients of $ F $ are algebraic, then the coefficients of $ f $ are also algebraic.
	\end{mythm}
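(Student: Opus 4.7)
The plan is to prove the Implicit function theorem in three stages: first constructing the coefficients $A_\tau$ formally by induction on $|\tau|$, then establishing convergence by Cauchy's method of majorants, and finally verifying the algebraicity statement from the explicit form of the recursion.

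For the formal construction, I would substitute the ansatz $f(x_1,\ldots,x_r)=\sum_{|\tau|>0}A_\tau x_1^{\tau_1}\cdots x_r^{\tau_r}$ into $F(x,f(x))=0$ and collect terms. The coefficient of a fixed monomial $x^\tau$ has the form $a_{0,1}A_\tau + P_\tau$, where $P_\tau$ is a polynomial with integer coefficients in the $a_{\sigma,k}$ with $|\sigma|+k\leq|\tau|$ and in the previously determined $A_\sigma$ with $0<|\sigma|<|\tau|$. Because $a_{0,1}\neq 0$, this equation can be solved uniquely for $A_\tau$, producing a unique formal power series $f$ with $f(0)=0$ satisfying $F(x,f(x))=0$.

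For convergence I would apply the majorant method. Absolute convergence of $F$ on $|x_1|+\cdots+|x_r|\leq R_1$, $|y|\leq R_2$ yields a constant $M>0$ with $|a_{\tau,k}|\leq MR_1^{-|\tau|}R_2^{-k}$. Introducing the scalar majorant
\[
\Psi(t,w) = \frac{M}{(1-t/R_1)(1-w/R_2)} - M - \frac{M}{R_2}w,
\]
one checks that after the substitution $t=x_1+\cdots+x_r$ its Taylor coefficients dominate in absolute value those of $F(x,y)-a_{0,1}y$, using $\binom{|\tau|}{\tau_1,\ldots,\tau_r}\geq 1$ for the multinomial expansion of $t^{|\tau|}$. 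The auxiliary equation $-|a_{0,1}|\varphi+\Psi(t,\varphi)=0$ is quadratic in $\varphi$ and hence explicitly solvable, yielding an analytic majorant $\varphi(t)=\sum_{n\geq 1}B_n t^n$ with $B_n\geq 0$ and positive radius of convergence $r_0$. A coefficientwise comparison of the recursions for $A_\tau$ and $B_n$ then bounds $|A_\tau|$ by the contribution of $B_{|\tau|}$ after the substitution, so $f$ converges absolutely on $|x_1|+\cdots+|x_r|<r_0$.

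Finally, the algebraicity assertion is immediate from the recursion in the first stage: each $A_\tau=-P_\tau/a_{0,1}$ arises from the $a_{\sigma,k}$ by a finite sequence of field operations whose only division is by $a_{0,1}\neq 0$. Thus every $A_\tau$ lies in the field $\QQ(\{a_{\sigma,k}\})$, and is algebraic whenever all $a_{\sigma,k}$ are. The main obstacle is the bookkeeping in Stage~2: one must verify carefully that the scalar majorant equation truly dominates the original multivariate one after the collapsing substitution $t=\sum x_i$, and that this domination propagates through the recursion for the $A_\tau$. The rest reduces to standard formal algebra together with the multinomial theorem.
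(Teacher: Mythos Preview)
Your sketch is a sound outline of the classical Cauchy majorant proof of the analytic implicit function theorem, and the algebraicity addendum follows as you say from the explicit recursion $A_\tau=-P_\tau/a_{0,1}$. One small quibble: the auxiliary equation $-|a_{0,1}|\varphi+\Psi(t,\varphi)=0$ is not literally quadratic in $\varphi$ as written, since $\Psi$ has a pole at $\varphi=R_2$; it becomes quadratic only after you clear the denominator $(1-\varphi/R_2)$. This is harmless but worth stating precisely.

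As for comparison with the paper: there is nothing to compare. The paper does not prove Theorem~\ref{thm:implicitfunc}; it is quoted as a preliminary tool with a reference to Krantz--Parks, and only the consequence $|A_\tau|\leq\lambda^{|\tau|}$ (derived in the paragraph following the statement from absolute convergence alone) is actually used downstream. Your argument is therefore not an alternative to the paper's proof but a self-contained justification of a result the authors simply cite. The approach you give is essentially the one found in the cited references, so in that sense it matches what the paper implicitly relies on.
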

	
	This version of the Implicit function theorem was used as well in \cite{fuchs-heintze-p5} where also polynomials with linear recurrences as coefficients are considered. As done there, we emphasize here again that the statement holds in a more general form:
	Suppose that $ F(x_1,\ldots,x_r,y) $ converges absolutely for $ \abs{x_1} + \cdots + \abs{x_r} \leq R_1 $ and that $ \abs{y-y_0} \leq R_2 $ for some $ y_0 \in \overline{\QQ} $ with $ F(0,\ldots,0,y_0) = 0 $. Then under the assumption that
	\begin{equation*}
		\frac{\partial F}{\partial y} (0,\ldots,0,y_0) \neq 0,
	\end{equation*}
	the conclusion is that there exists a power series
	\begin{equation*}
		f(x_1,\ldots,x_r) = \sum_{\abs{\tau} \geq 0} A_{\tau} x_1^{\tau_1} \cdots x_r^{\tau_r}
	\end{equation*}
	for which the same as above holds.
	
	Because we will need this later on, let us now point out that we also get an upper bound for the coefficients $ A_{\tau} $ of the power series $ f(x_1,\ldots,x_r) $ from the Implicit function theorem.
	Since $ f(x_1,\ldots,x_r) $ is absolutely convergent for $ \abs{x_1} + \cdots + \abs{x_r} \leq r_0 $, we can choose
	\begin{equation*}
		x_1 = \cdots = x_r = \frac{r_0}{r}
	\end{equation*}
	and get that the series
	\begin{equation*}
		\sum_{\abs{\tau} \geq 0} \abs{A_{\tau}} \left( \frac{r_0}{r} \right)^{\abs{\tau}}
	\end{equation*}
	is convergent.
	Hence for all but finitely many $ \tau $ we have
	\begin{equation*}
		\abs{A_{\tau}} \left( \frac{r_0}{r} \right)^{\abs{\tau}} \leq 1
	\end{equation*}
	and thus
	\begin{equation*}
		\abs{A_{\tau}} \leq \left( \frac{r}{r_0} \right)^{\abs{\tau}}.
	\end{equation*}
	Then there exists a constant $ \lambda > 0 $ such that for all $ \tau $ with $ \abs{\tau} > 0 $ we have
	\begin{equation*}
		\abs{A_{\tau}} \leq \lambda^{\abs{\tau}}.
	\end{equation*}
	
	Moreover, in our proofs we make use of a suitable version of the subspace theorem.
	We are going to apply the following version due to Schlickewei which is used in \cite{scremin-2006} as well and can also be found in \cite{bilu-2008}:
	
	\begin{mythm}[\textbf{Subspace theorem}]
		\label{thm:subspace}
		Let $ S $ be a finite set of absolute values of $ \QQ $, including the archimedean one and normalized in the usual way.
		Extend each $ \nu \in S $ to $ \overline{\QQ} $ in some way.
		For $ \nu \in S $ let $ L_{1,\nu}, \ldots, L_{n,\nu} $ be $ n $ linearly independent linear forms in $ n $ variables with algebraic coefficients.
		Finally, let $ \delta > 0 $.
		Then the solutions $ \uline{x} := (x_1,\ldots,x_n) \in \ZZ^n $ to the inequality
		\begin{equation*}
			\prod_{\nu \in S} \prod_{i=1}^{n} \abs{L_{i,\nu}(\uline{x})}_{\nu} < \left( \max_{1 \leq i \leq n} \abs{x_i} \right)^{-\delta}
		\end{equation*}
		are contained in finitely many proper subspaces of $ \QQ^n $.
	\end{mythm}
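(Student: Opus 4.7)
My plan is to expand $\alpha(n)$ as a convergent power series in $(g_1^n,\ldots,g_h^n)$ via the Implicit function theorem, truncate at a high order so that the tail is negligible, and then apply Schlickewei's Subspace theorem to a suitably scaled integer vector built from $p_n$, $q_n$ and the truncated monomials; the resulting non-trivial linear relation will force the limit $y_0=\lim_{n\to\infty}\alpha(n)$ to be rational, contradicting the hypothesis that $F(0,\ldots,0,y)$ has no rational zero.

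First I would apply Theorem \ref{thm:implicitfunc} in the generalized form stated after it. Since $l_d(0,\ldots,0)\neq 0$, the polynomial $F(0,\ldots,0,y)$ has degree exactly $d$, and by assumption its zeros $y_0^{(1)},\ldots,y_0^{(d)}$ are simple and irrational. At each such $y_0$ one has $\partial F/\partial y(0,\ldots,0,y_0)\neq 0$, yielding a power series
\[ \psi(x_1,\ldots,x_h)=y_0+\sum_{|\tau|>0}A_\tau\,x_1^{\tau_1}\cdots x_h^{\tau_h} \]
with coefficients in a number field $K$, absolutely convergent on a polydisk of radius $r_0$ around the origin, satisfying $|A_\tau|\leq\lambda^{|\tau|}$ for some $\lambda>0$. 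Since $0<g_i<1$, for $n$ large the point $(g_1^n,\ldots,g_h^n)$ lies in the convergence region, and the solutions $\alpha(n)$ of $F(g_1^n,\ldots,g_h^n,y)=0$ are precisely the $d$ values $\psi^{(j)}(g_1^n,\ldots,g_h^n)$. By pigeonholing over the $d$ branches I may restrict to an infinite set of indices on which $\alpha(n)=\psi(g_1^n,\ldots,g_h^n)$ for a fixed branch with $\psi(0,\ldots,0)=y_0\in\overline{\QQ}\setminus\QQ$.

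Next I would assume for contradiction that infinitely many bad $n$ exist with $p_n,q_n\in\ZZ$, $0<q_n<e^{n\varepsilon}$, and $|\alpha(n)-p_n/q_n|\leq q_n^{-k}e^{-n\varepsilon}$. Fixing a positive integer $T$ large enough that the truncation $\psi_T=y_0+\sum_{0<|\tau|\leq T}A_\tau x_1^{\tau_1}\cdots x_h^{\tau_h}$ satisfies $|\alpha(n)-\psi_T(g_1^n,\ldots,g_h^n)|\leq e^{-C_2 n}$ for $C_2$ as large as desired, and setting $c=c_1\geq 2$ (so that each $cg_i$ is a positive integer), the vector $\mathbf{x}_n\in\ZZ^M$ with $M=\binom{T+h}{h}+1$ and entries $p_n c^{Tn}$, $q_n c^{Tn}$, $\{q_n c^{Tn}g_1^{\tau_1 n}\cdots g_h^{\tau_h n}\}_{0<|\tau|\leq T}$ has integer coordinates. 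Let $S$ consist of the archimedean place of $\QQ$ together with all primes dividing $c_1 c_2\cdots c_h$. I use at the archimedean place the linear form $L=-x_0+y_0 x_1+\sum_{0<|\tau|\leq T}A_\tau x_\tau$ (whose value on $\mathbf{x}_n$ equals $c^{Tn}(q_n\psi_T-p_n)$ and is bounded by $c^{Tn}(q_n^{1-k}e^{-n\varepsilon}+q_n e^{-C_2 n})$) together with the coordinate forms $x_1,\ldots,x_{M-1}$, and at every other $\nu\in S$ all $M$ coordinate forms. Since every prime factor of $c,c_2,\ldots,c_h$ lies in $S$, for each $i\geq 1$ the product formula gives $\prod_{\nu\in S}|x_i|_\nu=q_n^{S^c}\leq q_n$ and analogously $\prod_{\nu\in S}|x_0|_\nu\leq|p_n|$, so the full product $\prod_{\nu\in S}\prod_i|L_{i,\nu}(\mathbf{x}_n)|_\nu$ is at most $(|L(\mathbf{x}_n)|_\infty/|x_0|_\infty)\cdot|p_n|\cdot q_n^{M-1}$. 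Using $|p_n|\leq C q_n$ and the bounds on $|L(\mathbf{x}_n)|_\infty$, this reduces to $\ll q_n^{M+1-k}e^{-n\varepsilon}+q_n^{M+1}e^{-C_2 n}$; choosing $k>M+1$ and $T$ so that $C_2>(M+1)\varepsilon$, the product is $\leq e^{-n\varepsilon/2}$, whereas $(\max_i|x_{n,i}|)^{-\delta}\geq(q_n c^{Tn})^{-\delta}$, which for the choice $\delta=\varepsilon/(2T\log c)$ beats the product. Theorem \ref{thm:subspace} then places the $\mathbf{x}_n$ in finitely many proper subspaces of $\QQ^M$.

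After passing to a further infinite sub-sequence, I obtain non-trivial rationals $\mu_0,\mu_1,\{\mu_\tau\}$ with
\[ \mu_0 p_n+\mu_1 q_n+\sum_{0<|\tau|\leq T}\mu_\tau q_n g_1^{\tau_1 n}\cdots g_h^{\tau_h n}=0. \]
Dividing by $q_n$ and letting $n\to\infty$ (using $g_i^n\to 0$ and $p_n/q_n\to\alpha(n)\to y_0$) gives $\mu_0 y_0+\mu_1=0$. If $\mu_0\neq 0$, then $y_0=-\mu_1/\mu_0\in\QQ$, contradicting the hypothesis. If $\mu_0=0$, the identity $\mu_1+\sum_\tau\mu_\tau\prod_i g_i^{\tau_i n}=0$ holds for infinitely many $n$; a Vandermonde/growth-rate argument, sorting the values $r_\tau=\prod_i g_i^{\tau_i}\in(0,1)$ from largest to smallest and peeling them off one by one, forces $\mu_1=0$ and $\sum_{\tau:r_\tau=r}\mu_\tau=0$ for every realized $r$. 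In the multiplicatively independent case this collapses to all $\mu_\tau=0$, the desired contradiction; in general one passes to a multiplicatively independent basis $r_1,\ldots,r_s$ of the group generated by the $g_i$, rewrites the subspace relation in terms of distinct monomials in the $r_j$, and reruns the argument in the reduced dimension until only the trivial relation survives. The main obstacle is the bookkeeping in Step 3—keeping track of the product-formula cancellations of all powers of $c$ and $cg_i$ so that the remaining bound is strictly below $(\max_i|x_{n,i}|)^{-\delta}$—together with the multiplicatively dependent case in Step 4, which requires the reduction to an independent basis.
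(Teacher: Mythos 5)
Your proposal does not address the statement at all. The statement to be proved here is Theorem \ref{thm:subspace} itself, i.e.\ Schlickewei's $p$-adic generalization of the Schmidt Subspace theorem. What you have written is instead a proof sketch of an approximation result of the type of Theorem \ref{thm:severallrs} (implicit function expansion of $\alpha(n)$, truncation, construction of integer vectors, and then, in your own words, ``Theorem \ref{thm:subspace} then places the $\mathbf{x}_n$ in finitely many proper subspaces''). In other words, you invoke the very theorem you were asked to prove as a black box, so as a proof of the statement the argument is circular and contains no content towards it: nothing in your sketch bounds the solutions of the inequality $\prod_{\nu\in S}\prod_{i=1}^{n}\abs{L_{i,\nu}(\uline{x})}_{\nu} < (\max_i \abs{x_i})^{-\delta}$ by finitely many proper subspaces of $\QQ^n$.

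For comparison, the paper does not prove this theorem either: it is quoted as a known deep result, due to Schlickewei, with references to the expositions in \cite{bilu-2008} and \cite{scremin-2006}. Its actual proof requires the full Roth--Schmidt machinery (geometry of numbers, successive minima, Davenport's lemma, and the non-archimedean extensions), which is far beyond the power-series and truncation techniques you sketch. If your intention was to prove Theorem \ref{thm:severallrs}, then your outline is broadly parallel to the paper's argument (Lemma \ref{lem:approximp} plus the subspace step), though with a different, more cumbersome choice of linear forms and vectors; but that is a different statement from the one posed.
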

	
	Finally, we need the following auxiliary result which is proven as Lemma 2 in \cite{corvaja-zannier-2005} and also used as Lemma 4.2 in \cite{scremin-2006}:
	
	\begin{mylemma}
		\label{lem:qversusz}
		Let $ \xi \in \QQ\E_{\QQ} $ and let $ D $ be the minimal positive integer such that $ D^n \xi(n) \in \QQ\E_{\ZZ} $.
		Then, for every $ \varepsilon > 0 $, there are only finitely many $ n \in \NN $ such that the denominator of $ \xi(n) $ is smaller than $ D^n e^{-n\varepsilon} $.
	\end{mylemma}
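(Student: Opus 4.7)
My plan is to argue prime by prime, reducing the denominator bound to a sublinear upper bound on $v_p(\xi(n))$ away from its trivial value $-n e_p$, and to invoke a $p$-adic version of the subspace theorem. The trivial inequality $\operatorname{denom}(\xi(n)) \leq B \cdot D^n$, where $B$ is a common denominator of the $b_i$, is immediate from $Dc_i \in \ZZ$; the content is the matching lower bound. Writing $D = \prod_p p^{e_p}$, only primes $p \mid D$ can contribute to $\operatorname{denom}(\xi(n))$, so it is enough to prove that for each $p \mid D$ and each $\varepsilon_p > 0$,
\begin{equation*}
v_p(\xi(n)) \leq -n e_p + n \varepsilon_p \qquad \text{for all but finitely many } n,
\end{equation*}
and then to sum over $p \mid D$ after choosing the $\varepsilon_p$ with $\sum_{p \mid D} \varepsilon_p \log p \leq \varepsilon$.

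To handle a single prime $p \mid D$, I would set $S_p := \{i : v_p(c_i) = -e_p\}$, which is non-empty by minimality of $D$, and split $\xi(n) = A_p(n) + B_p(n)$ where $A_p$ collects the indices $i \in S_p$ and $B_p$ the rest. A direct count gives $v_p(B_p(n)) \geq -n(e_p - 1) - C$. Setting $\gamma_i := p^{e_p} c_i$ (a $p$-adic unit in $\QQ$) for $i \in S_p$, one writes $A_p(n) = p^{-n e_p}\, \tilde{A}_p(n)$ with $\tilde{A}_p(n) = \sum_{i \in S_p} b_i \gamma_i^n$; as soon as $v_p(\tilde{A}_p(n)) \leq n/2$, the ultrametric inequality singles out $A_p$ and yields $v_p(\xi(n)) = -n e_p + v_p(\tilde{A}_p(n))$. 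So the whole local estimate reduces to the sublinear bound
\begin{equation*}
v_p(\tilde{A}_p(n)) \leq n \varepsilon_p \qquad \text{for all but finitely many } n,
\end{equation*}
for the $p$-adic valuation of a non-degenerate power sum whose characteristic roots are $p$-adic units.

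This last inequality is the heart of the argument and the step I expect to be the main obstacle. It is precisely the kind of estimate supplied by the Evertse--van der Poorten lower bound for $S$-unit sums, obtained by a $p$-adic application of Theorem \ref{thm:subspace} to the tuple $(b_i \gamma_i^n)_{i \in S_p}$. The genuine subtlety is degeneracy: since the $\gamma_i$ lie in $\QQ$, the only possible root-of-unity ratios are $\gamma_i/\gamma_j = -1$, and such pairs are removed by restricting $n$ to a residue class modulo $2$ (following the paper's convention of reducing to positive characteristic roots); if a subsum were to vanish identically on $\NN$, a Vandermonde argument on the distinct $\gamma_i$ would force the corresponding $b_i$ to be zero, contradicting our standing convention. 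Combining the prime-wise estimates then gives
\begin{equation*}
-\log \operatorname{denom}(\xi(n)) = \sum_{p \mid D} v_p(\xi(n)) \log p \leq -n \log D + n \sum_{p \mid D} \varepsilon_p \log p,
\end{equation*}
so $\operatorname{denom}(\xi(n)) \geq D^n e^{-n\varepsilon}$ outside a finite set, as required.
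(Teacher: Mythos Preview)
The paper does not actually prove this lemma; it is quoted verbatim from Corvaja--Zannier (Lemma~2 in \cite{corvaja-zannier-2005}) and used as a black box. So there is no ``paper's own proof'' to compare against beyond that citation.

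Your argument is essentially correct and is a legitimate route to the result. A few small points: your displayed equality
\[
-\log \operatorname{denom}(\xi(n)) \;=\; \sum_{p \mid D} v_p(\xi(n)) \log p
\]
is not literally an equality (primes in the denominators of the $b_i$ may contribute a bounded amount, and one needs $-\max(0,-x)\le x$ rather than $=$), but the inequality goes in the direction you want, so the conclusion survives. The core local estimate $v_p(\tilde A_p(n))\le n\varepsilon_p$ does follow from Theorem~\ref{thm:subspace} exactly as you indicate: with $S$ containing $p$, $\infty$, and the primes in the $\gamma_i$, take $\uline{x}_n=((M\gamma_1)^n,\ldots,(M\gamma_k)^n)$, put the form $b_1X_1+\cdots+b_kX_k$ at the place $p$ and coordinate forms elsewhere, and use the product formula for the $S$-units $M\gamma_i$; the resulting subspace relation $\sum z_i(M\gamma_i)^n=0$ for infinitely many $n$ is killed by Vandermonde once the $\gamma_i$ are made distinct and positive via the parity split.

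For comparison, the original proof in \cite{corvaja-zannier-2005} also rests on the Subspace theorem but proceeds \emph{globally} rather than prime by prime: one observes that the denominator $q_n$ of $\xi(n)$ divides $BD^n$, so a small $q_n$ forces the integer $BD^n\xi(n)=\sum Bb_i(Dc_i)^n$ to be divisible by the large $S$-unit $BD^n/q_n$; this makes $\prod_{\nu\in S\setminus\{\infty\}}\bigl|\sum b_iX_i\bigr|_\nu$ small at the vector $((Dc_1)^n,\ldots,(Dc_h)^n)$, and a single application of Theorem~\ref{thm:subspace} finishes as above. Your local approach is perhaps more transparent about where each prime's contribution comes from, at the cost of invoking the Subspace theorem once per prime dividing $D$; the global argument is shorter but hides the arithmetic in the divisibility $q_n\mid BD^n$. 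Both are standard and equivalent in strength.
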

	
	As usual we write $ g(m) = O(h(m)) $ and $ g(m) \ll h(m) $ if there is a positive constant $ C $ such that $ \abs{g(m)} \leq C \abs{h(m)} $ and $ g(m) \leq C h(m) $, respectively.
	
	\section{Proofs}
	
	The proof of our Theorem \ref{thm:approxresult} has the same strategy as the proof of Theorem 3.1 in \cite{scremin-2006}.
	As a first step we will prove the following:
	
	\begin{mylemma}
		\label{lem:approxps}
		Let $ f(x,y) $, $ G_n $ and $ \alpha(n) $ be as in Theorem \ref{thm:approxresult}.
		Fix $ t > 0 $.
		Then there exists a positive integer $ s $ such that for each fixed $ r \in \set{0,1,\ldots,s-1} $ there is a power sum $ \eta_r \in \overline{\QQ}\E_{\QQ} $ satisfying
		\begin{equation*}
			\abs{\alpha(sm+r) - \eta_r(m)} \ll t^{sm}.
		\end{equation*}
	\end{mylemma}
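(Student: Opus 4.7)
My plan is to expand each Puiseux branch of the algebraic function $y$ defined by $f(x,y)=0$ near $x = \infty$, substitute $x = G_n$, restrict to an arithmetic progression $n = sm+r$, and re-expand the result as a power sum in $m$ (via the binomial theorem) up to an error of size $t^{sm}$. Since $c_1 > 1$ dominates the remaining characteristic roots, $G_n \to +\infty$ exponentially, so for all sufficiently large $n$ the algebraic number $\alpha(n)$ equals the value at $x = G_n$ of one of the finitely many Puiseux branches
\[ y_i(x) = x^{\mu_i} \sum_{k \geq 0} w_{ik}\, x^{-k/e_i}, \qquad 1 \leq i \leq N, \]
at infinity furnished by Theorem~\ref{thm:puiseux}, with $\mu_i \in \QQ$, ramification index $e_i$, and algebraic coefficients $w_{ik}$ subject to the height bounds given by that theorem.

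Set $s := \lcm(e_1, \ldots, e_N)$; this depends only on $f$. Fix $r \in \{0,1,\ldots,s-1\}$ and a branch index $i$. Writing $G_n = b_1 c_1^n(1 + u_n)$ with $u_n = \sum_{j \geq 2}(b_j/b_1)(c_j/c_1)^n$, each non-integer power of $G_n$ decomposes on the progression $n = sm+r$ as
\[ G_{sm+r}^{\mu_i - k/e_i} = b_1^{\mu_i - k/e_i}\, c_1^{r(\mu_i - k/e_i)} \cdot \bigl(c_1^{s(\mu_i - k/e_i)}\bigr)^m \cdot (1 + u_{sm+r})^{\mu_i - k/e_i}. \]
The first two factors form an algebraic constant; since $e_i \mid s$ the exponent $s(\mu_i - k/e_i)$ is an integer, so the third factor is the $m$-th power of a positive rational; and the last factor is expanded via the binomial series in powers of $u_{sm+r}$, which is itself a rational power sum in $m$ with characteristic roots $(c_j/c_1)^s \in (0,1)$. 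Truncating the Puiseux expansion of $y_i$ at order $K$ and each binomial series at order $L$ leaves truncation errors of sizes $O(c_1^{n(\mu_i - (K+1)/e_i)})$ and $O((c_2/c_1)^{n(L+1)})$ respectively; choosing $K, L$ large enough relative to $t$ (they depend on $t$, but $s$ does not) makes both errors $\ll t^{sm}$. The remaining finite sum is the desired $\eta_{i,r}(m) \in \overline{\QQ}\E_{\QQ}$ with $\abs{y_i(G_{sm+r}) - \eta_{i,r}(m)} \ll t^{sm}$.

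It remains to match $\alpha$ to a branch. For each large $m$, $\alpha(sm+r)$ is an exact root of $f(G_{sm+r}, y)$, hence equals $y_{i(m)}(G_{sm+r})$ for some branch index $i(m) \in \{1,\ldots,N\}$; by pigeonhole, at least one index $i_0 = i_0(r)$ is selected for infinitely many $m$, and we set $\eta_r := \eta_{i_0, r}$, absorbing the finitely many ``wrong-branch'' $m$'s into the implicit constant of $\ll$ (or equivalently restricting to the corresponding infinite subsequence). The main difficulty is bookkeeping rather than a genuine obstruction: one must show the truncation errors are uniform in $m$ using the explicit height bounds on the $w_{ik}$ from Theorem~\ref{thm:puiseux}, verify that after all the rearrangements the characteristic roots lie in $\QQ$ (rather than merely in $\overline{\QQ}$), and confirm that the finitely many terms arising from multiplying out the binomial expansions combine into a single power sum in $\overline{\QQ}\E_{\QQ}$ with only finitely many characteristic roots.
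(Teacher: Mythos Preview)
Your overall strategy---Puiseux expansion at infinity, substitution $x=G_n$, restriction to an arithmetic progression so that the fractional powers of $c_1$ become genuine rationals, binomial expansion of $(1+u_n)^{\mu_i-k/e_i}$, and a double truncation---is exactly the route the paper takes. The paper works with a single branch and simply sets $s$ equal to its ramification index $e$; your choice $s=\lcm(e_1,\ldots,e_N)$ is a harmless variant.

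There is, however, a genuine gap in your branch-matching paragraph. Pigeonhole tells you only that \emph{some} branch index $i_0$ is selected for infinitely many $m$; it does \emph{not} tell you that the remaining $m$ (those with $i(m)\neq i_0$) are finitely many. If a second branch $i_1$ is also hit infinitely often, then on those $m$ you would need
\[
\abs{y_{i_1}(G_{sm+r})-\eta_{i_0,r}(m)}\ll t^{sm},
\]
which is false in general: two distinct Puiseux branches at infinity differ by an amount comparable to a positive power of $G_{sm+r}$ (or at least do not decay), so no implicit constant can absorb infinitely many such $m$. Your parenthetical alternative, ``restricting to the corresponding infinite subsequence'', does not prove the lemma as stated, which asserts the bound for \emph{all} $m$. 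The paper sidesteps the issue by reading ``$\alpha(n)$ is a solution $y$ of $f(G_n,y)=0$'' as $\alpha(n)=y(G_n)$ for one fixed algebraic function $y(x)$ and then expanding \emph{that} branch; you should either adopt this reading explicitly or supply a separate argument that $\alpha$ eventually sticks to one branch along each residue class.

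A minor technical point: your stated binomial truncation error $O((c_2/c_1)^{n(L+1)})$ omits the prefactor $c_1^{n(\mu_i-k/e_i)}$, which can grow when $\mu_i>0$ and $k$ is small. The paper keeps this factor and chooses $L$ so that $c_1^{-v/s}(c_2/c_1)^{L+1}<t$, where $v$ is the bottom index of the Puiseux sum; you need the analogous condition $c_1^{\mu_i}(c_2/c_1)^{L+1}<t$ for each branch.
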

	
	\begin{proof}
		Using Theorem \ref{thm:puiseux}, we can express the solution $ y = y(x) $ of $ f(x,y) = 0 $ in its Puiseux expansion at $ x = \infty $.
		Thus we get
		\begin{equation*}
			y = \sum_{k=v}^{\infty} a_k x^{-k/e}
		\end{equation*}
		as well as the upper bound
		\begin{equation*}
			\abs{a_k} \leq \lambda^k
		\end{equation*}
		for $ k > 0 $ and a positive constant $ \lambda $.
		We put $ s := e $.
		Now we replace $ x $ by $ G_n $ and $ y $ by $ \alpha(n) $ to get the series expansion
		\begin{equation}
			\label{eq:alphaexp}
			\alpha(n) = \sum_{k=v}^{\infty} a_k G_n^{-k/s}.
		\end{equation}
		
		In order to construct the power sum $ \eta_r $ with the required approximation property, we will cut this series \eqref{eq:alphaexp} at a suitable point.
		For an integer $ K > 0 $ we can write
		\begin{equation*}
			\alpha(n) = \sum_{k=v}^{K} a_k G_n^{-k/s} + \sum_{k=K+1}^{\infty} a_k G_n^{-k/s}
		\end{equation*}
		and get the bound
		\begin{align*}
			\abs{\sum_{k=K+1}^{\infty} a_k G_n^{-k/s}} &\leq \sum_{k=K+1}^{\infty} \abs{a_k} \abs{G_n}^{-k/s} \\
			&\leq \sum_{k=K+1}^{\infty} \lambda^k \abs{G_n}^{-k/s} \\
			&= \sum_{k=K+1}^{\infty} \left( \frac{\lambda}{\abs{G_n}^{1/s}} \right)^k \\
			&= \left( \frac{\lambda}{\abs{G_n}^{1/s}} \right)^{K+1} \cdot \frac{1}{1 - \frac{\lambda}{\abs{G_n}^{1/s}}} \\
			&\leq 2 \left( \frac{\lambda}{\abs{G_n}^{1/s}} \right)^{K+1}
		\end{align*}
		for $ n \geq n_0 $ since $ \abs{G_n} $ goes to infinity as $ n $ does.
		Therefore we have
		\begin{equation*}
			\alpha(n) = \sum_{k=v}^{K} a_k G_n^{-k/s} + O\left( \left( \frac{\lambda}{\abs{G_n}^{1/s}} \right)^{K+1} \right)
		\end{equation*}
		for $ n \geq n_0 $.
		
		Consider now the arithmetic progression $ n = sm+r $ for $ s $ from above and fixed $ r \in \set{0,1,\ldots,s-1} $.
		Note that then bounds for $ n $ correspond to bounds for $ m $ and vice versa.
		For $ n \geq n_1 \geq n_0 $ we get
		\begin{equation*}
			\abs{G_n} \geq \frac{\abs{b_1}}{2} c_1^n
		\end{equation*}
		as well as
		\begin{equation*}
			\frac{\lambda}{\abs{G_n}^{1/s}} \leq \lambda \left( \frac{2}{\abs{b_1}} \right)^{1/s} c_1^{-n/s}
		\end{equation*}
		and
		\begin{equation*}
			\frac{\lambda}{\abs{G_{sm+r}}^{1/s}} \leq \underbrace{\lambda \left( \frac{2}{\abs{b_1}} \right)^{1/s} c_1^{-r/s}}_{=:g_r}\ \cdot\ c_1^{-m} = g_r c_1^{-m}.
		\end{equation*}
		At this point we choose $ K > 0 $ such that
		\begin{equation*}
			c_1^{-(K+1)/s} < t.
		\end{equation*}
		Thus, from here on $ K $ is fixed.
		Going on with the bound above yields
		\begin{equation*}
			\left( \frac{\lambda}{\abs{G_{sm+r}}^{1/s}} \right)^{K+1} \leq g_r^{K+1} c_1^{-(K+1)m} < g_r^{K+1} t^{sm}.
		\end{equation*}
		Hence we end up with
		\begin{equation}
			\label{eq:alphacutone}
			\alpha(sm+r) = \sum_{k=v}^{K} a_k G_{sm+r}^{-k/s} + O(t^{sm})
		\end{equation}
		for $ n \geq n_1 $.
		
		In the next step we take a closer look at $ G_{sm+r}^{-k/s} $ for $ k \in \set{v,v+1,\ldots,K} $.
		For this purpose we write
		\begin{equation*}
			G_n = b_1 c_1^n (1 + \sigma(n))
		\end{equation*}
		with
		\begin{equation*}
			\sigma(n) = \sum_{j=2}^{h} \frac{b_j}{b_1} \left( \frac{c_j}{c_1} \right)^n.
		\end{equation*}
		By the binomial series expansion we get
		\begin{equation*}
			G_n^{-k/s} = b_1^{-k/s} c_1^{-kn/s} \sum_{\ell=0}^{\infty} \bino{-k/s}{\ell} (\sigma(n))^{\ell}.
		\end{equation*}
		Again we aim for cutting this series.
		It is easy to see that there is a positive constant $ B $ such that for each $ k \in \set{v,v+1,\ldots,K} $ and $ \ell = 0,1,\ldots $ the bound
		\begin{equation*}
			\abs{\bino{-k/s}{\ell}} \leq B^{\ell}
		\end{equation*}
		is valid.
		Therefore, for a given value of $ L $, we get the upper bound
		\begin{align*}
			\abs{\sum_{\ell=L+1}^{\infty} \bino{-k/s}{\ell} (\sigma(n))^{\ell}} &\leq \sum_{\ell=L+1}^{\infty} \abs{\bino{-k/s}{\ell}} \abs{\sigma(n)}^{\ell} \\
			&\leq \sum_{\ell=L+1}^{\infty} \abs{B \sigma(n)}^{\ell} \\
			&= \abs{B \sigma(n)}^{L+1} \cdot \frac{1}{1 - \abs{B \sigma(n)}} \\
			&\leq 2\abs{B \sigma(n)}^{L+1}
		\end{align*}
		for $ n \geq n_2 \geq n_1 $ since $ \sigma(n) $ goes to $ 0 $ as $ n $ goes to infinity.
		Note that
		\begin{equation*}
			\sigma(n) = O\left( \left( \frac{c_2}{c_1} \right)^n \right).
		\end{equation*}
		This gives us now
		\begin{equation*}
			G_{sm+r}^{-k/s} = (b_1 c_1^r)^{-k/s} c_1^{-km} \sum_{\ell=0}^{L} \bino{-k/s}{\ell} (\sigma(sm+r))^{\ell} + O\left( c_1^{-km} \left( \frac{c_2}{c_1} \right)^{sm(L+1)} \right).
		\end{equation*}
		At this point we choose $ L \geq 0 $ such that for each $ k \in \set{v,v+1,\ldots,K} $ we have
		\begin{equation*}
			c_1^{-k/s} \left( \frac{c_2}{c_1} \right)^{L+1} < t.
		\end{equation*}
		Note that in fact this holds for all $ k \in \set{v,v+1,\ldots,K} $ if (and only if) it holds for $ k = v $.
		So $ L $ is also fixed now.
		Moreover, we get
		\begin{equation*}
			c_1^{-km} \left( \frac{c_2}{c_1} \right)^{sm(L+1)} = \left( c_1^{-k/s} \left( \frac{c_2}{c_1} \right)^{L+1} \right)^{sm} < t^{sm}
		\end{equation*}
		and thus
		\begin{equation*}
			G_{sm+r}^{-k/s} = (b_1 c_1^r)^{-k/s} c_1^{-km} \sum_{\ell=0}^{L} \bino{-k/s}{\ell} (\sigma(sm+r))^{\ell} + O\left( t^{sm} \right)
		\end{equation*}
		for $ n \geq n_2 $.
		
		Finally, we put the last expression into equation \eqref{eq:alphacutone} which yields
		\begin{equation*}
			\alpha(sm+r) = \sum_{k=v}^{K} a_k (b_1 c_1^r)^{-k/s} c_1^{-km} \sum_{\ell=0}^{L} \bino{-k/s}{\ell} (\sigma(sm+r))^{\ell} + O(t^{sm})
		\end{equation*}
		for $ n \geq n_2 $.
		Hence we define $ \eta_r \in \overline{\QQ}\E_{\QQ} $ by
		\begin{equation*}
			\eta_r(m) := \sum_{k=v}^{K} a_k (b_1 c_1^r)^{-k/s} c_1^{-km} \sum_{\ell=0}^{L} \bino{-k/s}{\ell} (\sigma(sm+r))^{\ell}
		\end{equation*}
		which satisfies
		\begin{equation}
			\label{eq:finalbound}
			\abs{\alpha(sm+r) - \eta_r(m)} \ll t^{sm}
		\end{equation}
		for $ n \geq n_2 $.
		By taking a new constant which fits also for the finitely many smaller values of $ n $, the bound \eqref{eq:finalbound} holds for all $ n \in \NN $.
		This proves the lemma.
	\end{proof}
	
	Having finished all preparations needed, we can now prove our first approximation theorem.
	
	\begin{proof}[Proof of Theorem \ref{thm:approxresult}]
		Put $ t = 1/9 $.
		Lemma \ref{lem:approxps} gives a positive integer $ s $ and for each $ r \in \set{0,1,\ldots,s-1} $, which we fix from here on, a power sum $ \eta_r \in \overline{\QQ}\E_{\QQ} $ such that
		\begin{equation*}
			\abs{\alpha(sm+r) - \eta_r(m)} \ll t^{sm}.
		\end{equation*}
		The construction of $ \eta_r $ shows that we can write
		\begin{equation*}
			\eta_r(m) = w_1 d_1^m + \cdots + w_H d_H^m
		\end{equation*}
		for $ H \geq 1 $ (since $ \alpha \neq 0 $ by the irreducibility assumption) as well as $ d_1,\ldots,d_H \in \QQ $ with $ d_1 > d_2 > \cdots > d_H > 0 $ and $ w_1,\ldots,w_H \in \overline{\QQ}^* $.
		Define $ k := H+1 \geq 2 $ and choose $ \varepsilon > 0 $ smaller than
		\begin{equation*}
			\min \setb{\frac{1}{2(s+2)}, \frac{1}{2k}}.
		\end{equation*}
		
		Assume indirectly that there are infinitely many values of $ m $ with corresponding $ (p,q) \in \ZZ^2 $ satisfying $ 0 < q < e^{(sm+r)\varepsilon} $ such that
		\begin{equation}
			\label{eq:indirassump}
			\abs{\alpha(sm+r) - \frac{p}{q}} \leq \frac{1}{q^k} e^{-(sm+r)\varepsilon}.
		\end{equation}
		
		We aim to apply the subspace theorem.
		Therefore fix a finite set $ S $ of absolute values of $ \QQ $ containing the archimedean one and such that $ d_1,\ldots,d_H $ are $ S $-units (i.e. their $ \nu $-valuation is $ 1 $ for all $ \nu \notin S $).
		Moreover we define for each $ \nu \in S $ the $ H+1 $ linear forms $ L_{0,\nu}, \ldots, L_{H,\nu} $ in the $ H+1 $ variables $ X_0, \ldots, X_H $ as follows.
		For $ i=0,\ldots,H $ and $ \nu \in S $ with either $ i \neq 0 $ or $ \nu \neq \infty $ we put
		\begin{equation*}
			L_{i,\nu} := X_i
		\end{equation*}
		and for the remaining one
		\begin{equation*}
			L_{0,\infty} := X_0 - w_1 X_1 - \cdots - w_H X_H.
		\end{equation*}
		Obviously for each $ \nu \in S $ the linear forms $ L_{0,\nu}, \ldots, L_{H,\nu} $ are linearly independent and all linear forms have algebraic coefficients.
		Let $ d $ be the smallest positive rational integer such that $ dd_i \in \ZZ $ for all $ i=1,\ldots,H $.
		Then $ d $ is also an $ S $-unit.
		We set $ e_i := dd_i $ for $ i=1,\ldots,H $.
		So $ e_i \in \ZZ $ for $ i=1,\ldots,H $.
		Now we consider the vectors
		\begin{equation*}
			\uline{x} = \uline{x}(m,p,q) := (pd^m, qe_1^m, qe_2^m, \ldots, qe_H^m) \in \ZZ^{H+1}.
		\end{equation*}
		
		In the next step, we proceed with taking a closer look at the double product appearing in the subspace theorem.
		Our first goal is to determine an upper bound for
		\begin{equation*}
			\prod_{\nu \in S} \prod_{i=0}^{H} \abs{L_{i,\nu}(\uline{x})}_{\nu}.
		\end{equation*}
		This double product can be splitted into
		\begin{equation*}
			\left( \prod_{i=1}^{H} \prod_{\nu \in S} \abs{L_{i,\nu}(\uline{x})}_{\nu} \right) \cdot \left( \prod_{\nu \in S \setminus \set{\infty}} \abs{L_{0,\nu}(\uline{x})}_{\nu} \right) \cdot \abs{L_{0,\infty}(\uline{x})}.
		\end{equation*}
		Since $ d $ as well as $ d_i $ and $ e_i $ for $ i=1,\ldots,H $ are $ S $-units, we have
		\begin{equation*}
			\prod_{\nu \in S} \abs{L_{i,\nu}(\uline{x})}_{\nu} = \prod_{\nu \in S} \abs{qe_i^m}_{\nu} = \prod_{\nu \in S} \abs{q}_{\nu} \leq q
		\end{equation*}
		for each $ i=1,\ldots,H $ and thus
		\begin{equation*}
			\prod_{i=1}^{H} \prod_{\nu \in S} \abs{L_{i,\nu}(\uline{x})}_{\nu} \leq q^H.
		\end{equation*}
		Furthermore, for the second product we get
		\begin{align*}
			\prod_{\nu \in S \setminus \set{\infty}} \abs{L_{0,\nu}(\uline{x})}_{\nu} &= \prod_{\nu \in S \setminus \set{\infty}} \abs{pd^m}_{\nu} \\
			&= \prod_{\nu \in S \setminus \set{\infty}} \abs{p}_{\nu} \cdot \prod_{\nu \in S \setminus \set{\infty}} \abs{d^m}_{\nu} \\
			&\leq 1 \cdot d^{-m} = d^{-m}.
		\end{align*}
		Finally, we have
		\begin{align*}
			\abs{L_{0,\infty}(\uline{x})} &= \abs{pd^m - w_1 q e_1^m - w_2 q e_2^m - \cdots - w_H q e_H^m} \\
			&= d^m \abs{p - q (w_1 d_1^m + \cdots + w_H d_H^m)} \\
			&= q d^m \abs{\frac{p}{q} - \eta_r(m)}.
		\end{align*}
		Putting all these bounds together yields
		\begin{equation}
			\label{eq:sstboundone}
			\prod_{\nu \in S} \prod_{i=0}^{H} \abs{L_{i,\nu}(\uline{x})}_{\nu} \leq q^{H+1} \abs{\eta_r(m) - \frac{p}{q}}.
		\end{equation}
		
		Since $ q < e^{(sm+r)\varepsilon} $, we have $ q^{-k} > e^{-k(sm+r)\varepsilon} $ and consequently the bound $ q^{-k} e^{-(sm+r)\varepsilon} > e^{-(k+1)(sm+r)\varepsilon} $.
		Recalling $ \varepsilon < \frac{1}{2k} $ as well as $ k \geq 2 $ and $ t = 1/9 $ gives us
		\begin{align}
			q^{-k} e^{-(sm+r)\varepsilon} &> e^{-(k+1)(sm+r)\varepsilon} > e^{-(sm+r) \frac{k+1}{2k}} \nonumber \\
			\label{eq:boundsqrtt}
			&= e^{-(sm+r) \left( \frac{1}{2} + \frac{1}{2k} \right)} > e^{-(sm+r) \left( \frac{1}{2} + \frac{1}{4} \right)} \\
			&= \left( e^{-3/4} \right)^{sm+r} > \left( \frac{1}{3} \right)^{sm+r} = \left( \sqrt{t} \right)^{sm+r}. \nonumber
		\end{align}
		Now Lemma \ref{lem:approxps} gave us a constant $ l > 0 $ such that
		\begin{align*}
			\abs{\eta_r(m) - \frac{p}{q}} &\leq \abs{\eta_r(m) - \alpha(sm+r)} + \abs{\alpha(sm+r) - \frac{p}{q}} \\
			&\leq l t^{sm+r} + \frac{1}{q^k} e^{-(sm+r)\varepsilon} \\
			&= \left( l \left( \sqrt{t} \right)^{sm+r} \right) \cdot \left( \sqrt{t} \right)^{sm+r} + \frac{1}{q^k} e^{-(sm+r)\varepsilon} \\
			&\leq \frac{2}{q^k} e^{-(sm+r)\varepsilon}
		\end{align*}
		for $ m $ large enough and by using the bounds \eqref{eq:indirassump} and \eqref{eq:boundsqrtt}.
		As $ 2 \leq e^{(sm+r) \varepsilon / 2} $ for $ m $ large enough, we can update the bound \eqref{eq:sstboundone} to
		\begin{align}
			\prod_{\nu \in S} \prod_{i=0}^{H} \abs{L_{i,\nu}(\uline{x})}_{\nu} &\leq q^{H+1} \abs{\eta_r(m) - \frac{p}{q}} \nonumber \\
			\label{eq:sstboundtwo}
			&\leq 2 q^{H+1-k} e^{-(sm+r) \varepsilon} \\
			&\leq e^{-(sm+r) \varepsilon / 2}. \nonumber
		\end{align}
		
		Having an upper bound for the double product, the next goal is to bound the maximum appearing in the inequality in the subspace theorem.
		We have already deduced the bound
		\begin{equation*}
			\abs{\eta_r(m) - \frac{p}{q}} \leq \frac{2}{q^k} e^{-(sm+r)\varepsilon}.
		\end{equation*}
		From this we can infer that
		\begin{equation*}
			\abs{\frac{p/q}{d_1^m}} \leq 2 \abs{w_1}
		\end{equation*}
		for large $ m $.
		Consequently, we have
		\begin{equation*}
			\abs{pd^m} \leq 2 \abs{w_1} q e_1^m
		\end{equation*}
		for still infinitely many $ m $.
		
		Hence, again for large values of $ m $, we can bound the appearing maximum in the following way:
		\begin{align*}
			\max_{0 \leq i \leq H} \abs{x_i} &= \max \setb{\abs{pd^m}, qe_1^m, \ldots, qe_H^m} \\
			&= \max \setb{\abs{pd^m}, qe_1^m} \\
			&\leq (1 + 2 \abs{w_1}) qe_1^m \\
			&\leq (1 + 2 \abs{w_1}) e^{(sm+r)\varepsilon} e_1^m \\
			&\leq e^{(sm+r)\varepsilon} e^{(sm+r)\varepsilon} e_1^m \\
			&\leq \left( e^{2\varepsilon} e_1 \right)^{sm+r}.
		\end{align*}
		Now choose $ \delta > 0 $ small enough such that
		\begin{equation*}
			\delta < \frac{\varepsilon}{2 \log \left( e^{2\varepsilon} e_1 \right)}
		\end{equation*}
		which yields
		\begin{align*}
			\left( \max_{0 \leq i \leq H} \abs{x_i} \right)^{\delta} &\leq \left( e^{2\varepsilon} e_1 \right)^{(sm+r) \delta} = \left( \left( e^{2\varepsilon} e_1 \right)^{\delta} \right)^{sm+r} \\
			&< \left( e^{\varepsilon / 2} \right)^{sm+r} = e^{(sm+r) \varepsilon / 2}.
		\end{align*}
		Together with inequality \eqref{eq:sstboundtwo} we end up with
		\begin{equation*}
			\prod_{\nu \in S} \prod_{i=0}^{H} \abs{L_{i,\nu}(\uline{x})}_{\nu} \leq e^{-(sm+r) \varepsilon / 2} < \left( \max_{0 \leq i \leq H} \abs{x_i} \right)^{-\delta}.
		\end{equation*}
		
		Since this inequality holds for infinitely many vectors $ \uline{x} = \uline{x}(m,p,q) $, Theorem \ref{thm:subspace} implies that infinitely many vectors $ \uline{x} $ are contained in a fixed proper subspace of $ \QQ^{H+1} $.
		Let
		\begin{equation*}
			z_0 X_0 - z_1 X_1 - \cdots - z_H X_H = 0
		\end{equation*}
		with $ z_0, z_1, \ldots, z_H \in \QQ $ be the defining equation of this subspace.
		Thus we have
		\begin{equation}
			\label{eq:relation}
			z_0 p d^m - z_1 q e_1^m - \cdots - z_H q e_H^m = 0
		\end{equation}
		for infinitely many triples $ (m,p,q) $.
		
		If $ z_0 = 0 $, then we would have
		\begin{equation*}
			z_1 e_1^m + \cdots + z_H e_H^m = 0
		\end{equation*}
		for infinitely many $ m $.
		Since $ e_1, \ldots, e_H $ are distinct positive integers, this ends up in the contradiction $ z_1 = \cdots = z_H = 0 $.
		
		Therefore we can assume that $ z_0 = 1 $.
		Rewriting equation \eqref{eq:relation} gives us the equality
		\begin{equation}
			\label{eq:betam}
			\frac{p}{q} = z_1 d_1^m + \cdots + z_H d_H^m =: \beta(m).
		\end{equation}
		Note that $ \beta \in \QQ\E_{\QQ} $.
		We will show that in fact $ d_1, \ldots, d_H $ must be integers if the corresponding coefficient in $ \beta $ is non-zero.
		Assume the contrary, i.e. there exists an index $ i $ such that $ d_i \in \QQ \setminus \ZZ $.
		Then the smallest positive integer $ D $ with the property that $ D^m \beta(m) \in \QQ\E_{\ZZ} $ satisfies $ D \geq 2 $.
		By Lemma \ref{lem:qversusz}, for all but finitely many $ m $ the denominator of $ \beta(m) $ is bounded from below by $ D^m e^{-m\varepsilon} $.
		Thus for infinitely many $ m $ we have
		\begin{equation*}
			q \geq D^m e^{-m\varepsilon} \geq 2^m e^{-m\varepsilon}.
		\end{equation*}
		Comparing this with the upper bound
		\begin{equation*}
			q < e^{(sm+r) \varepsilon} \leq e^{(s+1)m \varepsilon}
		\end{equation*}
		gives
		\begin{equation*}
			e^{(s+1) \varepsilon} > q^{1/m} \geq 2 e^{-\varepsilon}
		\end{equation*}
		and, since $ \varepsilon < \frac{1}{2(s+2)} $, also
		\begin{equation*}
			2 < e^{(s+2) \varepsilon} < e^{1/2}.
		\end{equation*}
		This is a contradiction, proving that $ d_i \in \ZZ $ for $ i=1,\ldots,H $ if the corresponding coefficient is not zero.
		Hence we have $ \beta \in \QQ\E_{\ZZ} $.
		
		Now we insert equation \eqref{eq:betam} into equation \eqref{eq:indirassump}.
		The result is that for infinitely many $ m $ we have
		\begin{equation*}
			\abs{\alpha(sm+r) - \beta(m)} \leq \frac{1}{q^k} e^{-(sm+r)\varepsilon} \leq e^{-(sm+r)\varepsilon}.
		\end{equation*}
		Since $ \beta \in \QQ\E_{\ZZ} $ has non-negative characteristic roots, this contradicts the assumption in the theorem.
		Thus the theorem is proven.
	\end{proof}
	
	For the proof of Theorem \ref{thm:severallrs}, which will be done in an analogous way as for Theorem \ref{thm:approxresult}, we start again with proving a lemma, similar to Lemma \ref{lem:approxps}.
	In the proof of this lemma we use some ideas also appearing in \cite{fuchs-heintze-p5} to prepare the application of the Implicit function theorem.
	
	\begin{mylemma}
		\label{lem:approximp}
		Let $ f, G_n^{(0)}, \ldots, G_n^{(d)}, F, g_1, \ldots, g_h $ and $ \alpha(n) $ be as in Theorem \ref{thm:severallrs}.
		Fix $ t > 0 $.
		Then there exists a power sum $ \eta \in \overline{\QQ}\E_{\QQ} $ satisfying
		\begin{equation*}
			\abs{\alpha(n) - \eta(n)} \ll t^n.
		\end{equation*}
	\end{mylemma}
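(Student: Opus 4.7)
The plan is to apply the general version of the Implicit Function Theorem stated after Theorem \ref{thm:implicitfunc} near a root of $F(0,\ldots,0,y)$ in order to express $\alpha(n)$ as an absolutely convergent power series in $g_1^n,\ldots,g_h^n$, and then define $\eta$ by truncating that series at a suitable point. Since $l_d(0,\ldots,0) \neq 0$, the univariate polynomial $F(0,\ldots,0,y)$ has degree exactly $d$, and by hypothesis its $d$ roots are simple. As $n \to \infty$ the coefficients of $F(g_1^n,\ldots,g_h^n,y)$ (as a polynomial in $y$) converge to those of $F(0,\ldots,0,y)$ because every $g_i^n \to 0$, so the roots of the former tend to the distinct roots of the latter; this forces $\alpha(n)$ to stay close to a single fixed root $y_0$ of $F(0,\ldots,0,y)$ for all $n \geq n_0$. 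Simplicity of $y_0$ gives $\partial F/\partial y(0,\ldots,0,y_0) \neq 0$, so the Implicit Function Theorem produces an absolutely convergent power series
\begin{equation*}
	\phi(x_1,\ldots,x_h) = y_0 + \sum_{\abs{\tau} \geq 1} A_\tau\, x_1^{\tau_1}\cdots x_h^{\tau_h},
\end{equation*}
with $A_\tau \in \overline{\QQ}$, valid on a region $\abs{x_1}+\cdots+\abs{x_h} \leq r_0$, and satisfying $F(x_1,\ldots,x_h,\phi(x_1,\ldots,x_h)) \equiv 0$. For $n$ large the tuple $(g_1^n,\ldots,g_h^n)$ lies in this region since every $g_i < 1$, and local uniqueness of the root near $y_0$ forces $\alpha(n) = \phi(g_1^n,\ldots,g_h^n)$. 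The discussion following Theorem \ref{thm:implicitfunc} also supplies a constant $\lambda > 0$ with $\abs{A_\tau} \leq \lambda^{\abs{\tau}}$ for every $\abs{\tau} > 0$.

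Given $t > 0$, I would fix an integer $K$ with $g_1^{K+1} \leq t$, which exists because $0 < g_1 < 1$, and set
\begin{equation*}
	\eta(n) := y_0 + \sum_{1 \leq \abs{\tau} \leq K} A_\tau\, g_1^{n\tau_1}\cdots g_h^{n\tau_h}.
\end{equation*}
After grouping terms with equal characteristic root $\prod_{i} g_i^{\tau_i}$, this is a power sum with positive rational characteristic roots and algebraic coefficients, so $\eta \in \overline{\QQ}\E_{\QQ}$.

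For the tail estimate, using $0 < g_i \leq g_1 < 1$ gives $\prod_i g_i^{n\tau_i} \leq g_1^{n\abs{\tau}}$, and the number of multi-indices $\tau$ with $\abs{\tau}=k$ equals $\binom{k+h-1}{h-1}$, which grows only polynomially in $k$. Hence for $n$ large enough that $\lambda g_1^n < 1/2$,
\begin{equation*}
	\abs{\alpha(n) - \eta(n)} = \abs{\sum_{\abs{\tau} > K} A_\tau\, g_1^{n\tau_1}\cdots g_h^{n\tau_h}} \leq \sum_{k > K} \binom{k+h-1}{h-1} (\lambda g_1^n)^k \ll (\lambda g_1^n)^{K+1} \ll g_1^{n(K+1)} \leq t^n,
\end{equation*}
and absorbing the finitely many remaining values of $n$ into the implicit constant gives $\abs{\alpha(n) - \eta(n)} \ll t^n$ as required. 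The only delicate step is the identification $\alpha(n) = \phi(g_1^n,\ldots,g_h^n)$ for large $n$, which relies on the hypothesis that $F(0,\ldots,0,y)$ has only simple roots so that $\alpha(n)$ is pinned to one branch; beyond that, the argument reduces to a routine tail estimate for a convergent power series.
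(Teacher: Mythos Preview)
Your proof is correct and follows essentially the same approach as the paper: apply the Implicit Function Theorem at a simple root of $F(0,\ldots,0,y)$ to write $\alpha(n)$ as a convergent power series in $g_1^n,\ldots,g_h^n$, then truncate at level $K$ with $g_1^{K+1}\le t$ and bound the tail. The only cosmetic differences are that the paper first shows $\alpha(n)$ is bounded (via $l_d(0,\ldots,0)\neq 0$) before concluding it accumulates at a root, and it uses the cruder bound $\binom{k+h-1}{k}\le h^k$ for the multi-index count instead of your polynomial estimate.
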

	
	\begin{proof}
		By definition we have
		\begin{equation*}
			F(g_1^n,\ldots,g_h^n,\alpha(n)) = 0
		\end{equation*}
		which can be written as
		\begin{equation*}
			l_d(g_1^n,\ldots,g_h^n) \alpha(n)^d + \cdots + l_0(g_1^n,\ldots,g_h^n) = 0.
		\end{equation*}
		Isolating one $ \alpha(n) $ gives
		\begin{equation*}
			\alpha(n) = - \frac{l_{d-1}(g_1^n,\ldots,g_h^n) + \cdots + l_0(g_1^n,\ldots,g_h^n) \alpha(n)^{-(d-1)}}{l_d(g_1^n,\ldots,g_h^n)}
		\end{equation*}
		which implies
		\begin{equation*}
			\abs{\alpha(n)} \leq \max \setb{1, \frac{\abs{l_{d-1}(g_1^n,\ldots,g_h^n)} + \cdots + \abs{l_0(g_1^n,\ldots,g_h^n)}}{\abs{l_d(g_1^n,\ldots,g_h^n)}}}.
		\end{equation*}
		Since $ 0 < g_i < 1 $ for all $ i=1,\ldots,h $ and $ l_d(0,\ldots,0) \neq 0 $, we get that $ \alpha(n) $ is bounded.
		This boundedness together with
		\begin{align*}
			\abs{F(0,\ldots,0,\alpha(n))} &= \abs{F(0,\ldots,0,\alpha(n)) - F(g_1^n,\ldots,g_h^n,\alpha(n))} \\
			&\leq \sum_{i=0}^{d} \abs{l_i(0,\ldots,0) - l_i(g_1^n,\ldots,g_h^n)} \cdot \abs{\alpha(n)}^i \quad \overset{n \rightarrow \infty}{\longrightarrow} 0
		\end{align*}
		yields that $ F(0,\ldots,0,\alpha(n)) $ goes to $ 0 $ as $ n $ goes to infinity.
		Thus, for $ n $ large enough, $ \alpha(n) $ lies in an arbitrary small neighborhood of a zero of $ F(0,\ldots,0,y) $.
		The condition that $ F(0,\ldots,0,y) $ has no multiple zero as a polynomial in $ y $ is equivalent to
		\begin{equation*}
			\frac{\partial F}{\partial y} (0,\ldots,0,y_0) \neq 0
		\end{equation*}
		for any $ y_0 $ satisfying $ F(0,\ldots,0,y_0) = 0 $.
		Hence we can apply the Implicit function theorem \ref{thm:implicitfunc} including the appended remarks.
		This gives us a power series
		\begin{equation*}
			y(x_1,\ldots,x_h) = \sum_{\abs{\tau} \geq 0} A_{\tau} x_1^{\tau_1} \cdots x_h^{\tau_h}
		\end{equation*}
		with algebraic coefficients and a constant $ \lambda > 0 $ such that
		\begin{equation*}
			\abs{A_{\tau}} \leq \lambda^{\abs{\tau}}
		\end{equation*}
		for $ \abs{\tau} > 0 $ and
		\begin{equation*}
			\alpha(n) = y(g_1^n,\ldots,g_h^n)
		\end{equation*}
		for $ n $ large enough.
		So we have
		\begin{equation}
			\label{eq:alphacuttwo}
			\alpha(n) = \sum_{\abs{\tau} \geq 0} A_{\tau} \left( g_1^n \right)^{\tau_1} \cdots \left( g_h^n \right)^{\tau_h}
		\end{equation}
		with $ 0 < g_h < \cdots < g_2 < g_1 < 1 $, $ g_i \in \QQ $ for $ i=1,\ldots,h $ if $ n $ is large.
		
		In what follows we will need an upper bound for the number of vectors $ \tau = (\tau_1,\ldots,\tau_h) \in \NN_0^h $ satisfying $ \abs{\tau} = k $ for a given positive integer $ k $.
		One can think of this combinatoric question as counting the number of possibilities for distributing $ k $ equal balls into $ h $ different urns.
		It is well known that this number is given by
		\begin{equation*}
			\bino{k+h-1}{k} = \prod_{i=1}^{k} \frac{i+h-1}{i} = \prod_{i=1}^{k} \left( 1 + \frac{h-1}{i} \right) \leq h^k.
		\end{equation*}
		Hence there are no more than $ h^k $ vectors $ \tau $ with $ \abs{\tau} = k $.
		
		For a given integer $ K \geq 0 $ we can split equation \eqref{eq:alphacuttwo} into
		\begin{equation*}
			\alpha(n) = \sum_{\abs{\tau} = 0}^{K} A_{\tau} \left( g_1^n \right)^{\tau_1} \cdots \left( g_h^n \right)^{\tau_h} + \sum_{\abs{\tau} = K+1}^{\infty} A_{\tau} \left( g_1^n \right)^{\tau_1} \cdots \left( g_h^n \right)^{\tau_h}.
		\end{equation*}
		The second sum in this equation can be bounded by
		\begin{align*}
			\abs{\sum_{\abs{\tau} = K+1}^{\infty} A_{\tau} \left( g_1^n \right)^{\tau_1} \cdots \left( g_h^n \right)^{\tau_h}} &\leq \sum_{\abs{\tau} = K+1}^{\infty} \abs{A_{\tau}} \left( g_1^n \right)^{\tau_1} \cdots \left( g_h^n \right)^{\tau_h} \\
			&\leq \sum_{\abs{\tau} = K+1}^{\infty} \lambda^{\abs{\tau}} \left( g_1^n \right)^{\abs{\tau}} \\
			&\leq \sum_{k = K+1}^{\infty} \lambda^k h^k \left( g_1^n \right)^k \\
			&= \sum_{k = K+1}^{\infty} \left( \lambda h g_1^n \right)^k \\
			&= \left( \lambda h g_1^n \right)^{K+1} \cdot \frac{1}{1 - \lambda h g_1^n} \\
			&\leq 2 (\lambda h)^{K+1} \left( g_1^{K+1} \right)^n
		\end{align*}
		for $ n $ large enough.
		Therefore we choose $ K \geq 0 $ such that
		\begin{equation*}
			g_1^{K+1} < t.
		\end{equation*}
		So $ K $ is fixed now.
		We get
		\begin{equation*}
			2 (\lambda h)^{K+1} \left( g_1^{K+1} \right)^n < 2 (\lambda h)^{K+1} t^n = O(t^n).
		\end{equation*}
		Hence we have
		\begin{equation*}
			\alpha(n) = \sum_{\abs{\tau} = 0}^{K} A_{\tau} \left( g_1^n \right)^{\tau_1} \cdots \left( g_h^n \right)^{\tau_h} + O(t^n).
		\end{equation*}
		Defining $ \eta \in \overline{\QQ}\E_{\QQ} $ by
		\begin{equation*}
			\eta(n) = \sum_{\abs{\tau} = 0}^{K} A_{\tau} \left( g_1^n \right)^{\tau_1} \cdots \left( g_h^n \right)^{\tau_h}
		\end{equation*}
		satisfies
		\begin{equation*}
			\abs{\alpha(n) - \eta(n)} \ll t^n
		\end{equation*}
		for $ n $ large enough.
		By taking a new constant, the last bound holds for all $ n \in \NN $ and the lemma is proven.
	\end{proof}
	
	Now we have all tools together an can prove Theorem \ref{thm:severallrs}. We will proceed as in the proof of Theorem \ref{thm:approxresult}.
	
	\begin{proof}[Proof of Theorem \ref{thm:severallrs}]
		Put again $ t = 1/9 $.
		Lemma \ref{lem:approximp} gives a power sum $ \eta \in \overline{\QQ}\E_{\QQ} $ such that
		\begin{equation*}
			\abs{\alpha(n) - \eta(n)} \ll t^n.
		\end{equation*}
		The construction of $ \eta $ shows that we can write
		\begin{equation*}
			\eta(n) = w_1 d_1^n + \cdots + w_H d_H^n
		\end{equation*}
		for $ H \geq 1 $ (since $ \alpha \neq 0 $ by the assumption $ G_n^{(0)} \neq 0 $) as well as $ d_1,\ldots,d_H \in \QQ $ with $ 1 \geq d_1 > d_2 > \cdots > d_H > 0 $ and $ w_1,\ldots,w_H \in \overline{\QQ}^* $.
		Define $ k := H+1 \geq 2 $ and choose $ \varepsilon > 0 $ smaller than $ \frac{1}{2k} $.
		
		Assume indirectly that there are infinitely many values of $ n $ with corresponding $ (p,q) \in \ZZ^2 $ satisfying $ 0 < q < e^{n\varepsilon} $ such that
		\begin{equation}
			\label{eq:indirsecondthm}
			\abs{\alpha(n) - \frac{p}{q}} \leq \frac{1}{q^k} e^{-n\varepsilon}.
		\end{equation}
		
		Once again we aim to apply the subspace theorem.
		Therefore fix a finite set $ S $ of absolute values of $ \QQ $ containing the archimedean one and such that $ d_1,\ldots,d_H $ are $ S $-units.
		Moreover we define for each $ \nu \in S $ the $ H+1 $ linear forms $ L_{0,\nu}, \ldots, L_{H,\nu} $ in the $ H+1 $ variables $ X_0, \ldots, X_H $ as follows.
		For $ i=0,\ldots,H $ and $ \nu \in S $ with either $ i \neq 0 $ or $ \nu \neq \infty $ we put
		\begin{equation*}
			L_{i,\nu} := X_i
		\end{equation*}
		and for the remaining one
		\begin{equation*}
			L_{0,\infty} := X_0 - w_1 X_1 - \cdots - w_H X_H.
		\end{equation*}
		Obviously for each $ \nu \in S $ the linear forms $ L_{0,\nu}, \ldots, L_{H,\nu} $ are linearly independent and all linear forms have algebraic coefficients.
		Let $ d $ be the smallest positive rational integer such that $ dd_i \in \ZZ $ for all $ i=1,\ldots,H $.
		Then $ d $ is also an $ S $-unit.
		We set $ e_i := dd_i $ for $ i=1,\ldots,H $.
		So $ e_i \in \ZZ $ for $ i=1,\ldots,H $.
		Now we consider the vectors
		\begin{equation*}
			\uline{x} = \uline{x}(n,p,q) := (pd^n, qe_1^n, qe_2^n, \ldots, qe_H^n) \in \ZZ^{H+1}.
		\end{equation*}
		
		For the double product appearing in the subspace theorem  we get, in the same way as in the proof of Theorem \ref{thm:approxresult}, the upper bound
		\begin{equation}
			\label{eq:sstboundthree}
			\prod_{\nu \in S} \prod_{i=0}^{H} \abs{L_{i,\nu}(\uline{x})}_{\nu} \leq q^{H+1} \abs{\eta(n) - \frac{p}{q}}.
		\end{equation}
		
		Since $ q < e^{n\varepsilon} $, we have $ q^{-k} > e^{-kn\varepsilon} $ and consequently the bound $ q^{-k} e^{-n\varepsilon} > e^{-(k+1)n\varepsilon} $.
		Recalling $ \varepsilon < \frac{1}{2k} $ as well as $ k \geq 2 $ and $ t = 1/9 $ gives us
		\begin{align}
			q^{-k} e^{-n\varepsilon} &> e^{-(k+1)n\varepsilon} > e^{-n \frac{k+1}{2k}} \nonumber \\
			\label{eq:boundrttn}
			&= e^{-n \left( \frac{1}{2} + \frac{1}{2k} \right)} > e^{-n \left( \frac{1}{2} + \frac{1}{4} \right)} \\
			&= \left( e^{-3/4} \right)^n > \left( \frac{1}{3} \right)^n = \left( \sqrt{t} \right)^n. \nonumber
		\end{align}
		Now Lemma \ref{lem:approximp} gave us a constant $ L > 0 $ such that
		\begin{align*}
		\abs{\eta(n) - \frac{p}{q}} &\leq \abs{\eta(n) - \alpha(n)} + \abs{\alpha(n) - \frac{p}{q}} \\
		&\leq L t^n + \frac{1}{q^k} e^{-n\varepsilon} \\
		&= \left( L \left( \sqrt{t} \right)^n \right) \cdot \left( \sqrt{t} \right)^n + \frac{1}{q^k} e^{-n\varepsilon} \\
		&\leq \frac{2}{q^k} e^{-n\varepsilon}
		\end{align*}
		for $ n $ large enough and by using the bounds \eqref{eq:indirsecondthm} and \eqref{eq:boundrttn}.
		As $ 2 \leq e^{n\varepsilon / 2} $ for $ n $ large enough, we can update the bound \eqref{eq:sstboundthree} to
		\begin{align}
			\prod_{\nu \in S} \prod_{i=0}^{H} \abs{L_{i,\nu}(\uline{x})}_{\nu} &\leq q^{H+1} \abs{\eta(n) - \frac{p}{q}} \nonumber \\
			\label{eq:sstboundfour}
			&\leq 2 q^{H+1-k} e^{-n \varepsilon} \\
			&\leq e^{-n \varepsilon / 2}. \nonumber
		\end{align}
		
		Furthermore, by the same arguments as in the proof of Theorem \ref{thm:approxresult}, we get the inequality
		\begin{equation*}
			\abs{pd^n} \leq 2 \abs{w_1} q e_1^n
		\end{equation*}
		as well as the bound
		\begin{equation*}
			\max_{0 \leq i \leq H} \abs{x_i} \leq \left( e^{2\varepsilon} e_1 \right)^n
		\end{equation*}
		for $ n $ large enough.
		By choosing $ \delta > 0 $ small enough such that
		\begin{equation*}
			\delta < \frac{\varepsilon}{2 \log \left( e^{2\varepsilon} e_1 \right)},
		\end{equation*}
		we get
		\begin{equation*}
			\left( \max_{0 \leq i \leq H} \abs{x_i} \right)^{\delta} < e^{n \varepsilon / 2}.
		\end{equation*}
		Together with inequality \eqref{eq:sstboundfour} this yields
		\begin{equation*}
			\prod_{\nu \in S} \prod_{i=0}^{H} \abs{L_{i,\nu}(\uline{x})}_{\nu} \leq e^{-n \varepsilon / 2} < \left( \max_{0 \leq i \leq H} \abs{x_i} \right)^{-\delta}.
		\end{equation*}
		
		Since this inequality holds for infinitely many vectors $ \uline{x} = \uline{x}(n,p,q) $, Theorem \ref{thm:subspace} implies that infinitely many vectors $ \uline{x} $ are contained in a fixed proper subspace of $ \QQ^{H+1} $.
		Let
		\begin{equation*}
			z_0 X_0 - z_1 X_1 - \cdots - z_H X_H = 0
		\end{equation*}
		with $ z_0, z_1, \ldots, z_H \in \QQ $ be the defining equation of this subspace.
		Thus we have
		\begin{equation}
			\label{eq:secrelation}
			z_0 p d^n - z_1 q e_1^n - \cdots - z_H q e_H^n = 0
		\end{equation}
		for infinitely many triples $ (n,p,q) $.
		
		If $ z_0 = 0 $, then again we would have
		\begin{equation*}
			z_1 e_1^n + \cdots + z_H e_H^n = 0
		\end{equation*}
		for infinitely many $ n $.
		Since $ e_1, \ldots, e_H $ are distinct positive integers, this ends up in the contradiction $ z_1 = \cdots = z_H = 0 $.
		
		Therefore we can assume that $ z_0 = 1 $.
		Rewriting equation \eqref{eq:secrelation} gives us the equality
		\begin{equation}
			\label{eq:betan}
			\frac{p}{q} = z_1 d_1^n + \cdots + z_H d_H^n =: \beta(n).
		\end{equation}
		Note that $ \beta \in \QQ\E_{\QQ} $.
		We will show that in fact the characteristic roots of $ \beta $ with non-zero coefficient must be integers.
		Assume the contrary, i.e. there exists an index $ i $ such that $ d_i \in \QQ \setminus \ZZ $ and $ z_i \neq 0 $.
		Then the smallest positive integer $ D $ with the property that $ D^n \beta(n) \in \QQ\E_{\ZZ} $ satisfies $ D \geq 2 $.
		By Lemma \ref{lem:qversusz}, for all but finitely many $ n $ the denominator of $ \beta(n) $ is bounded from below by $ D^n e^{-n\varepsilon} $.
		Thus for infinitely many $ n $ we have
		\begin{equation*}
			q \geq D^n e^{-n\varepsilon} \geq 2^n e^{-n\varepsilon}.
		\end{equation*}
		Comparing this with the upper bound
		\begin{equation*}
			q < e^{n \varepsilon}
		\end{equation*}
		gives
		\begin{equation*}
			e^{\varepsilon} > q^{1/n} \geq 2 e^{-\varepsilon}
		\end{equation*}
		and
		\begin{equation*}
			2 < e^{2 \varepsilon} < e^{1/k} \leq e^{1/2}.
		\end{equation*}
		This is a contradiction, proving that $ d_i \in \ZZ $ if $ z_i \neq 0 $ for $ i=1,\ldots,H $.
		Hence we have $ \beta \in \QQ\E_{\ZZ} $.
		Recalling that $ 1 \geq d_1 > d_2 > \cdots > d_H > 0 $ are the characteristic roots of $ \beta $ implies
		\begin{equation*}
			\beta(n) = z_1 \in \QQ.
		\end{equation*}
		
		Using the equations \eqref{eq:indirsecondthm} and \eqref{eq:betan}, this gives us
		\begin{equation*}
			\abs{\alpha(n) - z_1} \leq \frac{1}{q^k} e^{-n\varepsilon} \leq e^{-n\varepsilon}.
		\end{equation*}
		Thus $ \alpha(n) $ converges to the rational number $ z_1 $.
		Since we have seen in the proof of Lemma \ref{lem:approximp} that $ \alpha(n) $ converges to a zero of $ F(0,\ldots,0,y) $, we get that $ F(0,\ldots,0,y) $ has a rational zero.
		This contradicts the assumption in the theorem, concluding the proof.
	\end{proof}

\end{document}